\newtheorem{theorem}{Theorem}[section]
\newtheorem{remark}{Remark}[section]
\newtheorem{Definition}{Definition}[section]
\def\qed{{\hfill{\vrule height4pt width3pt depth2pt}}}
   \font\caa=cmsy12\def\Cal#1{\hbox{\caa #1}}
\long\def\mcomment#1{}
\def\ad#1{\begin{aligned}#1\end{aligned}}  
\def\a#1{\begin{align*}#1\end{align*}} \def\an#1{\begin{align}#1\end{align}}
\def\e#1{\begin{equation}#1\end{equation}} \def\d{\operatorname{div}}
\def\p#1{\begin{pmatrix}#1\end{pmatrix}} 
  \numberwithin{equation}{section}
\def\boxit#1{\vbox{\hrule height1pt \hbox{\vrule width1pt\kern1pt
     #1\kern1pt\vrule width1pt}\hrule height1pt }}
 \def\lab#1{\boxit{\small #1}\label{#1}}
  \def\mref#1{\boxit{\small #1}\ref{#1}}
 \def\meqref#1{\boxit{\small #1}\eqref{#1}}
   \def\lab#1{\label{#1}} \def\mref#1{\ref{#1}} \def\meqref#1{\eqref{#1}}
\newcommand{\disp}{\displaystyle}
\newcommand{\bq}{\begin{equation}}
\newcommand{\eq}{\end{equation}}
\begin{document}

\title[symmetric mixed finite element]{
The simplest  mixed finite element method
   for linear elasticity in the symmetric formulation
   on $n$-rectangular grids
    }
\date{}

\author {Jun Hu}
\address{LMAM and School of Mathematical Sciences, Peking University,
  Beijing 100871, P. R. China.  hujun@math.pku.edu.cn}

\author {Hongying Man}
\address{Department of Mathematics, Beijing Institute of Technology,
  Beijing 100081, P. R. China. manhy@bit.edu.cn }

\author {Shangyou Zhang}
\address{Department of Mathematical Sciences, University of Delaware,
    Newark, DE 19716, USA.  szhang@udel.edu }

\thanks{The first author was supported by  the NSFC Project 11271035, and  in part by the NSFC Key Project 11031006.}

\begin{abstract}
   A family of mixed finite elements is proposed for
   solving the first order system of linear elasticity equations
   in any space dimension, where the stress field
    is approximated by symmetric finite element tensors.
    This family of elements has a perfect matching between the
      stress components and the displacement.
   The discrete spaces for
      the normal stress $\sigma_{ii}$,
     the shear stress $\sigma_{ij}$ and the displacement $u_i$ are
   $\operatorname{span}\{1,x_i\}$, $\operatorname{span}\{1,x_i,x_j\}$ and
	$\operatorname{span}\{1\}$, respectively, on rectangular grids.
   In particular,  the definition remains the same for all space dimensions.
     As a result of these choices, the theoretical analysis
     is independent of the spatial dimension as well.
     In 1D, this element is nothing else but the 1D
       Raviart-Thomas element, which is the only conforming element
       in this family.
   In 2D and higher dimensions, they are new elements but of the minimal
     degrees of freedom.
    The total degrees of freedom per element is $2$ plus $1$ in 1D,
        7 plus 2 in 2D, and  15 plus 3 in 3D.
   The previous record of the least degrees  of freedom  is,
      13 plus 4 in 2D, and   54 plus 12 in 3D, on the rectangular grid.
      These elements are the  simplest element for any space dimension.

    The  well-posedness condition and the optimal a priori error estimate of the family of finite elements
      are proved for  both pure  displacement and traction problems.
    Numerical tests in 2D and 3D are presented to show a superiority
       of the new element over others, as a superconvergence is
        surprisingly exhibited.
  \vskip 15pt

\noindent{\bf Keywords.}{
     First order system, symmetric stress field, mixed finite element,
   nonconforming finite element, inf-sup condition.}

 \vskip 15pt

\noindent{\bf AMS subject classifications.}
    { 65N30, 73C02.}

\end{abstract}
\maketitle

\section{Introduction}
The first order system of equations,  for the symmetric stress field
   $\sigma\in\Sigma:=H({\rm div},\Omega,\mathbb {S})$
 and the displacement field $u\in V:=L^2(\Omega,\mathbb{R}^n)$, reads:
Find $(\sigma,u)\in\Sigma\times V$  such that
\an{\lab{eqn1} \ad{
  (A\sigma,\tau)+({\rm div}\tau,u)&= 0 &&\forall \tau\in\Sigma,\\
   ({\rm div}\sigma,v)&= (f,v) &\qquad& \forall  v\in V.}
   }
Here the symmetric tensor-valued stress space $\Sigma$ and
   the vector-valued  displacement
   space $V$ are, respectively,
  \an{ \lab{S} H({\rm div},\Omega,\mathbb {S})
	&= \Big\{ \p{\sigma_{ij} }_{n \times n} \in H(\d, \Omega)
	\ \Big| \ \sigma_{ij} = \sigma_{ji} \  \Big\}, \\
	\lab{V}  L^2(\Omega,\mathbb{R}^n) &=
	 \Big\{ \p{u_1, & \cdots, u_n}^T
          \ \Big| \ u_i \in L^2(\Omega) \Big\}  .}
In 1D, one  example of the problem \eqref{eqn1} is the mixed formulation
   of the 1D Poisson equation;
In 2D and 3D,  the stress-displacement formulation based on the
    Hellinger-Reissner  principle for the
 linear elasticity can be regarded as a celebrated example
    of \eqref{eqn1}.

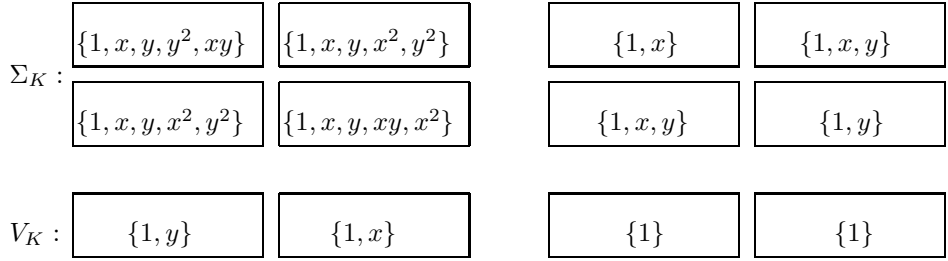
\begin{figure}[htb]
   \begin{center}\setlength{\unitlength}{1.2pt}
  \def\bo{\begin{picture}(80,80)(0,0)\put(0,0){\line(1,0){60}}
  \put(0,0){\line(0,1){20}} \put(60,20){\line(0,-1){20}}
  \put(60,20){\line(-1,0){60}}\end{picture}}
  \def\boo{\begin{picture}(80,80)(0,50)\put(0,20){\line(1,0){70}}
  \put(0,50){\line(0,1){30}} \put(70,50){\line(0,-1){30}}
  \put(70,50){\line(-1,0){70}}  \end{picture}}
 \begin{picture}(300,75)(-20,5)
  \put(0,35){\begin{picture}(80,80)(0,0)
    \put(-20,20){$\Sigma_K:$}
    \multiput(0,0)(0,25){2}{{\multiput(0,0)(65,0){2}\bo}}
      \put(1,30){$\{1,x,y,y^2,xy\}$}
     \put(66,5){$\{1,x,y, xy, x^2\}$}
     \put(1,5){$\{1,x,y,x^2,y^2\}$}
     \put(66,30){$\{1,x,y,x^2,y^2\}$}
   \put(150,0){\begin{picture}(80,80)(0,0)
     \multiput(0,0)(0,25){2}{{\multiput(0,0)(65,0){2}\bo}}
     \put(20,30){$\{1,x\}$}\put(15,5){$\{1,x,y\}$}
    \put(79,30){$\{1,x,y\}$}\put(85,5){$\{1,y\}$}  \end{picture}}
      \end{picture}}
 \put(0,0){\begin{picture}(80,80)(0,0)\put(-20,5){$V_K:$}
      {\multiput(0,0)(65,0){2}\bo}
     \put(17,5){$\{1,y\}$}\put(81,5){$\{1,x\}$}
   \put(150,0){\begin{picture}(80,80)(0,0)
      {\multiput(0,0)(65,0){2}\bo}
     \put(24,5){$\{1\}$}\put(90,5){$\{1\}$}  \end{picture}} \end{picture}}
  \end{picture}
   \end{center}
\caption{2D elements by Hu-Shi
	\cite{Hu-Shi}, Yi \cite{Yi} and this paper. }
    \lab{comparison}
\end{figure}

Because of the symmetry constraint on the stress tensor, $\sigma_{ij}
	=\sigma_{ji}$,
   it is extremely difficult to construct stable conforming finite
    elements of \eqref{eqn1} even if for
    2D and 3D, as stated in the plenary presentation to the $2002$
    International Congress of Mathematicians by  D. N.
      Arnold.
Hence compromised works use composite
    elements \cite{Arnold-Douglas-Gupta,Johnson-Mercier},
   or enforce the symmetry condition weakly
   \cite{Amara-Thomas, Arnold-Brezzi-Douglas, Boffi-Brezzi-Fortin,
        Morley, Stenberg-1, Stenberg-2, Stenberg-3}.
The landmarks in this direction are the respective works of Arnold and Winther
   \cite{Arnold-Winther-conforming} and  Arnold, Awanou, and Winther
   \cite{Arnold-Awanou-Winther}.
In particular,  a sufficient condition of the discrete stable method
    is proposed in these two papers,
   which states  that a discrete exact sequence guarantees
    the stability of  the mixed method.
Based on such a condition,  conforming mixed finite elements on
     the simplicial and rectangular triangulations are
    developed for both 2D and 3D
   \cite{Adams-Cockburn,Arnold-Awanou,Arnold-Awanou-Winther,
     Arnold-Winther-conforming}.
 In order to keep conformity the vertex degrees of freedom
      are in particular employed in these conforming methods.
To avoid the complexity of conforming mixed element and
    also vertex degrees of freedom, new weak-symmetry finite elements
   \cite{Arnold-Falk-Winther, Cockburn-Gopalakrishnan-Guzman,
    Gopalakrishnan-Guzman, Guzman},
    non-conforming finite elements
    \cite{Arnold-Winther-n, Hu-Shi, Gopalakrishnan-Guzman-n,
     Man-Hu-Shi, Yi-3D, Yi}
      are constructed.
See also \cite{Chen-Wang,Awanou} for the enrichment of
    nonconforming elements of \cite{Hu-Shi,Man-Hu-Shi}
     to conforming elements.
 However, most of these elements are difficult to be implemented;
      numerical implementation can
    only be found in \cite{Carstensen,Carstensen-Gunther-Reininghaus-Thiele2008,Yi} so far, all in 2D.

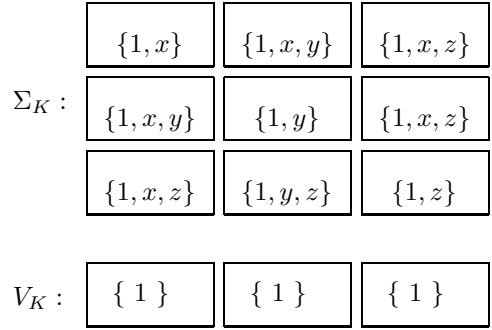
\begin{figure}[htb]
   \begin{center}\setlength{\unitlength}{.8pt}
  \def\bo{\begin{picture}(60,60)(0,0)\put(0,0){\line(1,0){60}}
  \put(0,0){\line(0,1){30}} \put(60,30){\line(0,-1){30}}
  \put(60,30){\line(-1,0){60}}\end{picture}}
 \begin{picture}(200,140)(0,-40)
   \put(0,-53){\begin{picture}(60,60)(0,0)
    \multiput(0,0)(65,0){3}{\bo}
    \multiput(12,12)(65,0){3}{\{ 1 \}}
         \put(-35, 10){$V_K:$}
     \end{picture}}
 \put(0,0){\begin{picture}(60,60)(0,0)
     \multiput(0,0)(0,35){3}{{\multiput(0,0)(65,0){3}\bo}}
     \put(-35,50){$\Sigma_K:$}
    \put(14,77){$\{1,x\}$}\put(8,42){$\{1,x,y\}$}\put(8,7){$\{1,x,z\}$}
    \put(73,77){$\{1,x,y\}$}\put(79,42){$\{1,y\}$}\put(73,7){$\{1,y,z\}$}
    \put(138,77){$\{1,x,z\}$}\put(138,42){$\{1,x,z\}$}\put(144,7){$\{1,z\}$}
    \end{picture}}
  \end{picture}
   \end{center}
 \caption{The 3D element of this paper. }
    \lab{three}
\end{figure}

In this paper,   a new family of  minimal,  any space-dimensional,
   symmetric, nonconforming mixed finite elements
     for the problem \meqref{eqn1} is constructed.
It is motivated by a simple fact that, by \meqref{S},
    the derivative on a normal   stress component $\sigma_{ii}$ is only  in $x_i$ direction;
    while those on $\sigma_{ij}$ are only in  $x_i$ and $x_j$ directions. Thus, the minimal finite element space for $\sigma_{ii}$ would
   be $\text{span}\{1, x_i\}$ on each $n$-dimensional rectangular element;
    the  minimal finite element space for $\sigma_{ij}$ would  be
     $\text{span}\{1, x_i, x_j\}$ on each $n$-dimensional rectangular element.
For the displacement \meqref{V},  there is no derivative and
    the minimal finite element space would be the constant space
      $\text{span}\{ 1 \}$. The spaces are displayed in the right diagram
    in Figure \mref{comparison} and in Figure \mref{three}.
 Surprisingly, it is shown  that these minimal finite element spaces
 can actually form a family of  stable and convergent methods
 for  \meqref{eqn1}. However, the  analysis herein has to overcome the difficulty to prove 
 the  discrete inf-sup condition,  one key ingredient for the stability 
analysis of the mixed finite element method \cite{Brezzi-Fortin}, and the difficulty related to
nonconformity of the discrete   spaces  for the stresses.  For both the elasticity problem and the Poisson problem,  the stability analysis
 of mixed finite element methods in literature  is  established by special commuting properties 
 of canonical interpolation operators defined by degrees of freedom
 of  discrete stress spaces,  see,  for instance, \cite{Adams-Cockburn,Arnold-Awanou,Arnold-Awanou-Winther,
     Arnold-Winther-conforming} and \cite{Brezzi-Fortin}.
 To overcome the first  difficulty,  a new macro-element technique is proposed to prove a  Fortin Lemma
 for mixed methods under consideration.  Note that the  macro-element technique is widely used to analyze
 the stability of mixed methods for the Stokes problem, see \cite{Brezzi-Fortin} and  references therein.
 However, it is not used to the elasticity problem before.
 For  the pure displacement problem, an explicit constructive proof is also given for the discrete inf-sup condition.
   In order to deal with the second difficulty,  a superconvergence property of the consistency  error is proved.
The mathematical {\em elegance and  beauty } of this family of  minimal elements  is gestated within,  besides
    the {\em perfect matching},  the independence of the spatial dimension $n$.
In $n$ dimension, the constructive proof of the discrete inf-sup condition
  can be divided into $n$ steps of that for the 1D  Raviart-Thomas element,  and  the consistency error
  can be decomposed as $n$ two-dimensional  consistency errors
   (For 1D, there is no consistency error.)

The superiority of the family of elements over the existing elements
   in the literature is its simplicity and high accuracy.
In fact, a family of   2D rectangular, conforming elements, of which
   the lowest order has   45 stress and 12 displacement degrees of
    freedom per element, is proposed in \cite{Arnold-Awanou}.
A nonconforming mixed finite element  based on rectangular grids
    is proposed with 19 stress and 6
    displacement degrees of freedom on each element in  \cite{Yi-3D}.
Later on, a simplified mixed
   finite element on 2D rectangular grids is constructed with 13 stress
   and 4 displacement degrees of freedom on each rectangle independently
    in \cite{Hu-Shi, Yi},
   see the left diagram in Figure \mref{comparison}, which is the simplest rectangular element of first order in 2D in the literature so far.
Doubtless, the 2D element with 7 stress and 2 displacement degrees
    of freedom on each rectangle of this paper
     is  the simplest  rectangular element,
     see the right diagram in Figure \mref{comparison}.
Due to a perfect matching (for symmetry constraint),  the new element
    has much less degrees of
    freedom (dof) but a higher order of approximation property,
    compared to previous elements  \cite{Hu-Shi, Yi-3D, Yi}.
This is confirmed by numerical results.
In 3D, the new element has only 15 stress plus 3 displacement dof on
   each element, much
   simpler than the first order element, with 54 plus 12 dof per element,
      of \cite{Man-Hu-Shi}.
Notice  that the element of
   \cite{Man-Hu-Shi} is previously
     the simplest rectangular element in 3D.

The rest of the paper is organized as follows.  The minimal element in 2D is introduced in Section 2.
 The well-posedness of the finite
    element problem, i.e. the discrete coerciveness and the
    discrete inf-sup condition,  is proved in Section 3 for the pure displacement problem.
The optimal order convergence  is shown in Section 4.
The element is extended to any space-dimension in Section 5.
In Section 6, the stability of the minimal element is shown
   for the pure traction problem.
Numerical results in 2D and 3D,
  including that for a pure traction problem,
 are provided in Section \ref{s-numerical}, which show
   a superconvergence of the minimal elements herein.

\section{A minimal element in 2D}

The 2D element is presented separately in this section
 for fixing the main idea while the whole family will
  be developed  in
 Section 5.  Also for simplicity we consider a pure
   displacement problem first.
  The analysis for other boundary value problems will
   be given in Section 6.

Consider a pure displacement problem
  (and a pure traction problem in Section 6):
\begin{subequations}
 \an{\lab{e-d}
    \d (A^{-1} \epsilon (u) ) &= f  \quad \hbox{ in } \ \Omega,\\
     u &=0 \quad\hbox{ on } \ \Gamma=\partial\Omega,  }
 \end{subequations}
The domain is assumed to be a rectangle (it is straightforward that results can be extended to domains which can be covered
 by rectangles),  which is subdivided by a family
   of rectangular grids  $\mathcal{T}_h$ (with grid size $h$).

The set of all edges in $\mathcal{T}_h$ is denoted by $\mathcal{E}_h$,
   which is divided into two sets, the set $\mathcal{E}_{h,H}$
of horizontal edges   and the set  $\mathcal{E}_{h,V}$ of vertical edges.
Given any edge $e\in\mathcal{E}_h$,  one fixed unit normal vector
   $n$ with components $(n_1,n_2)$ is assigned.
For each $K\in\mathcal{T}_h$, define  the affine invertible
   transformation
\a{
   F_K   : \qquad   \hat{K} & \rightarrow K,\\
	   \p{\hat x\\ \hat y} & \rightarrow \p{x\\y}=\p{
	   \frac{h_{x,K}}{2}\hat x+x_{0,K}\\
       \frac{h_{y,K}}{2}\hat y+y_{0,K}},
 }
with the center $(x_{0,K},y_{0,K})$ of $K$, the horizontal length $h_{x,K}$,
   and the vertical length
 $h_{y,K}$,  and the reference element $\hat{K}=[-1,1]^2$.

On each element $K\in\mathcal{T}_h$, a
	constant finite element space
   for the displacement is defined by
 \an{\lab{VK} V_K=\mathcal{P}_0(K,\mathbb{R}^2)=\Big\{ \p{v_1\\v_2} \ \big| \
	  v_1,v_2\in P_0(K) \Big\}; }
while the symmetric linear finite element space for the stress is defined by
  \an{\lab{SK}
        \Sigma_K  =\Big\{ \sigma\in\p{ P_{1,1}(K) & \mathcal{P}_1(K)\\
                  \mathcal{P}_1(K) & P_{1,2}(K)}_\mathbb{S} \Big\}, }
where subscript $\mathbb{S}$ indicates a symmetric matrix stress, and
\a{  P_{1,1}(K) & ={\rm span}\{1,x\},\\
     \mathcal{P}_1(K) & ={\rm span}\{1,x,y\}, \\
     P_{1,2}(K) &={\rm span}\{1,y\} . }
The dimension of the space $V_K$ is 2, and that
	 of $\Sigma_K$ is 7.
The nodal degrees of freedom for  $(v_1, v_2)$, $\sigma_{11}$, and $\sigma_{22}$, are

\begin{itemize}

\item the moment of degree 0 on $K$ for $v_1$ and $v_2$;

\item the moments of degree 0 on two vertical edges
   of $K$ for $\sigma_{11}$;

\item the moments of degree 0 on two horizontal edges
   of $K$ for $\sigma_{22}$;

\end{itemize}

The nodal degrees of freedom for $\sigma_{12}$ will be studied as follows.
Locally $\mathcal{P}_1(K)$ is the space of linear polynomials.
Globally,  let $W_h$ be the $P_1$-nonconforming space on $\mathcal{T}_h$,
   which is first introduced  in \cite{Park-Sheen} as a nonconforming
   approximation space to $H^1(\Omega)$ on the quadrilateral mesh;
   see also \cite{Hu-Shi05}.
To be exact,  $W_h$ is the space of piecewise linear polynomials, which
   are continuous at all mid-edge points of triangulation $\mathcal{T}_h$.
$W_h$ is the finite element space approximating function $\sigma_{12}$.

The global spaces $\Sigma_h$ and $V_h$ are defined by
\an{\lab{Sh} \Sigma_h&=\{~\sigma=\p{\sigma_{11}& \sigma_{12}\\
   \sigma_{12} & \sigma_{22}
     }\in L^2(\Omega,\mathbb{S}) \ | \ \sigma|_K\in\Sigma_K
            \ \hbox{ for all } K\in\mathcal{T}_h , \\
     &\qquad \hbox{ $\sigma_{11}$ is continuous on all  vertical interior
	 edges,  } \nonumber \\
     &\qquad \hbox{ $\sigma_{22}$ is continuous on all  horizontal interior
	 edges,  }  \nonumber \\
     &\qquad \hbox{ $\sigma_{12}$ is continuous at all
        mid-points of interior edges  }  \},
       \nonumber  \\
  \lab{Vh}  V_h &= \{v\in L^2(\Omega,\mathbb{R}^2) \ | \
         v|_K\in V(K) \ \hbox{ for all } K\in\mathcal{T}_h \}.
    }
Since $\sigma_{11}$ is continuous on all vertical interior edges,
  the derivative  $\partial_x \sigma_{11}$ is well-defined in $L^2(\Omega)$.
However,  $\sigma_{12}$ is not continuous on $\Omega$ so that
     $\partial_x \sigma_{12}$ and $\partial_y \sigma_{12}$ are not
         in $L^2(\Omega)$.
Therefore the discrete stress space $\Sigma_h$ is a nonconforming
   approximation to  $H({\rm div},\Omega,\mathbb {S})$.
So  the discrete divergence operator ${\rm div}_h$ is defined elementwise
   with respect
    to $\mathcal{T}_h$,
\a{
      \d_h\tau|_K={\rm div}(\tau|_K)\quad \forall\tau\in\Sigma_h.
  }

The mixed variational form for \meqref{e-d} is  \meqref{eqn1}.
The mixed finite element approximation of Problem \meqref{eqn1} reads: Find
   $(\sigma_h,~u_h)\in\Sigma_h\times V_h$ such that
 \e{ \left\{ \ad{
    (A\sigma_h, \tau)+({\rm div}_h\tau, u_h)&= 0 &&
             \forall \tau \in\Sigma_h,\\
     (\d_h\sigma_h, v)& = (f, v) && \forall v\in V_h.
      } \right. \lab{DP}
    }
    It follows from the definition of $\Sigma_K$ that
    $\d_h \tau_h$ are  piecewise constant for any $\tau_h\in \Sigma_h$,
    which leads to
	\a{ \d_h  \Sigma_h \subset V_h.}
This, in turn, leads to a strong discrete divergence-free space:
 \an{ \lab {kernel}
    Z_h&= \{\tau_h\in\Sigma_h \ | \ (\d_h\tau_h, v)=0 \quad \forall
                  v\in V_h\}\\
 	\nonumber
          &= \{\tau_h \in\Sigma_h \ | \  \d_h \tau_h=0
	\hbox{\ pointwise } \}.
    }
For the  analysis, define the following broken norm:
 \an{\lab{h-norm}
      \|\tau\|_{H(\d_h)}=(\|\tau\|_0^2+\|{\rm
         div}_h\tau\|_0^2)^{1/2}\quad
	\forall \tau \in \Sigma_h.
   }

The rest of this section is devoted to an alternative definition to $W_h$, the space for
  $\sigma_{12}$ in $\Sigma_h$.
The dimension of  the space $\mathcal{P}_1(K)$ is three,
    less than the number of edges or vertexes of element $K$.
The discrete shear stress $\sigma_{12}$ is still  defined by
    four vertex-value functionals, which are not linearly independent though.
A constraint can be posed on those four functionals if one defines
    a functional set $\mathcal N$ on $\mathcal{P}_1(K)$,
     cf. \cite[Lemma 2.1]{Park-Sheen}.

Here the idea from \cite{Hu-Shi05} of a frame for
    $\mathcal{P}_1(K)$ will be used. To this end,  define the frame for the space
$\mathcal{P}_1(\hat{K})  ={\rm span}\{1,\hat{x},\hat{y}\}$ by
\a{  \phi_{-1,-1}& =\frac{ 1-\hat{x}-\hat{y}} 4 , &
        \phi_{1,-1}& =\frac{  1+\hat{x}-\hat{y}} 4, \\
       \phi_{1,1}& = \frac{ 1+\hat{x}+\hat{y}} 4, &
        \phi_{-1,1}& =\frac{  1-\hat{x}+\hat{y}} 4. }
This frame is depicted in Figure \ref{basis}.

\begin{figure}[htb]
   \begin{center}\setlength{\unitlength}{1.2pt}
   \begin{picture}(70,60)(0,0)\put(0,0){\line(1,0){60}}
  \put(0,0){\line(0,1){60}} \put(60,60){\line(0,-1){60}}
   \multiput(-3,30)(12,0){6}{\line(1,0){9}}
   \multiput(30,-3)(0,12){6}{\line(0,1){9}}
  \put(60,60){\line(-1,0){60}}\put(60,60){\circle*{3}}
 \put(60,0){\circle*{3}}\put(0,60){\circle*{3}}\put(0,0){\circle*{3}}
  \put(-80,4){$\disp\phi_{-1,-1}=\frac{1-\hat{x}-\hat{y}}4$}
  \put(-80,48){$\disp\phi_{-1,1}=\frac{1-\hat{x}+\hat{y}}4$}
  \put(64,4){$\disp\phi_{1,-1}=\frac{1+\hat{x}-\hat{y}}4$}
  \put(64,48){$\disp\phi_{1,1}=\frac{1+\hat{x}+\hat{y}}4$}
  \end{picture}
   \end{center}
 \caption{Four nodal (frame/not basis) functions of $W_h$ on $\hat K$. }
    \lab{basis}
\end{figure}
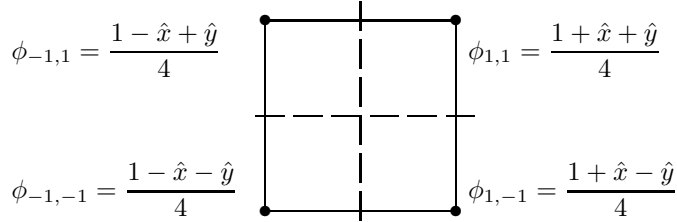

An interpolation operator $\Pi_{12}$, from $H^2(\Omega)$
	(i.e., some continuous functions) to $W_h$ is needed.
 The interpolation on $\hat K$ is defined as
  \a{\hat   \Pi_{12} \hat \sigma_{12} & =
     \hat  \sigma_{12}(\hat x_{1,\hat K}, \hat y_{1,\hat K})\phi_{-1,-1}
         + \hat  \sigma_{12}(\hat x_{2,\hat K}, \hat y_{2,\hat K})\phi_{1,-1}
      \\ &\quad \
    +\hat \sigma_{12}(\hat x_{3,\hat K}, \hat y_{3,\hat K})\phi_{1,1}
          +\hat \sigma_{12}(\hat x_{4,\hat K}, \hat y_{4,\hat K})\phi_{-1,1}, }
  where the four vertexes are numbered counterclock wise,
    \a{ (\hat x_{1,\hat K}, \hat y_{1,\hat K})&=(-1,-1), \\
         (\hat x_{2,\hat K}, \hat y_{2,\hat K})&=( 1,-1), \\
         (\hat x_{3,\hat K}, \hat y_{3,\hat K})&=( 1, 1), \\
       (\hat x_{4,\hat K}, \hat y_{4,\hat K})&=( -1, 1). }
 In the same fashion,  the interpolation $\Pi_{12}$ is defined on
      all $K\in\mathcal{T}_h$ by
  \an{ \lab{Pi12}
      \Pi_{12} \sigma_{12} (x,y) &= \sigma_{12}(x_{1,K},y_{1,K})
          \phi_{-1,-1}(F_K^{-1}(x,y))
       \\ &\quad \ \nonumber
     +  \sigma_{12}(x_{2,K},y_{2,K})
          \phi_{ 1,-1}(F_K^{-1}(x,y))
       \\ &\quad \   \nonumber
     +  \sigma_{12}(x_{3,K},y_{3,K})
          \phi_{ 1, 1}(F_K^{-1}(x,y))
       \\ &\quad \  \nonumber
     +  \sigma_{12}(x_{4,K},y_{4,K})
          \phi_{-1,1}(F_K^{-1}(x,y)), }
where $(x,y)\in K$, and $(x_{i,K},y_{i,K})$ are the four vertexes of $K$.
As $\phi_{-1,-1}(0,-1)=\phi_{1,-1}(0,-1)=1/2$, it follows that
   \a{ \Pi_{12}\sigma_{12}|_{e_{\pm}}(e_m) =
	\frac 12 \left(\sigma_{12}(e_1)+\sigma_{12}(e_2)\right), }
  where $e_+$ and $e_-$ are two sides of an edge $e\in \mathcal{E}_h$,
   $e_m$ is  the mid-point of $e$, and $e_1$ and $e_2$ are two endpoints
    of $e$.
That is,  $\Pi_{12}\sigma_{12}$ is continuous at all mid-points of edges.
For a vertex in $\mathcal{T}_h$,
  \a{  c_{i,j}=(ih,jh), \quad 0\le i, j \le N, \ N=1/h, }
  it may be shared by one, or two, or four elements $K\in\mathcal{T}_h$.
The combination of the frame functions at the vertex $c_{i,j}$ forms one
   global frame function $\phi_{i,j}$.
For example, at vertex $c_{0,1}$, as it is shared by
    two elements, $K_{1,1}=[0,h]\times[0,h]$ and $K_{1,2}=[0,h]\times[h,2h]$,
\a{ \psi_{0,1} = \begin{cases} \phi_{-1,1}(\frac 2h(x-\frac h2),
		\frac 2h(y-\frac h2)) & (x,y)\in K_{1,1}, \\
	     \phi_{-1,-1}(\frac 2h(x-\frac h2),
		\frac 2h(y-\frac {3h}2)) & (x,y)\in K_{1,2}, \\
	      0 &  \hbox{elsewhere on }\ \Omega. \end{cases} }
Note that $\psi_{i,j}$ is not continuous at $c_{i,j}$.
Thus, the finite element space for $\sigma_{12}$ in \meqref{Sh} is
   \an{\lab{Wh} W_h =
	\{ s \in L^2(\Omega) \ \mid \	 s=\sum_{i,j=0}^N
	p_{ij} \psi_{i,j} \}. }

\section{Well-posedness of the discrete problem in 2D}
This section considers the well-posedness of the discrete problem
	\meqref{DP}, which needs
   the following two conditions.

\begin{enumerate}
\item K-ellipticity. There exists a constant $C>0$, independent of the
   meshsize $h$ such that
	\an{ \lab{below} (A\tau, \tau)\geq C\|\tau\|_{H(\d_h)}^2\quad
	   \forall \tau \in Z_h, }
    where $Z_h$ is the divergence-free space defined in \meqref{kernel}.

\item  Discrete B-B condition.
    There exists a positive constant $C>0$
            independent of the meshsize $h$, such that
	\an{\lab{inf-sup}
   \inf_{v\in V_h}   \sup_{\tau\in\Sigma_h}\frac{({\rm
        div}_h\tau, v_h)}{\|\tau\|_{H(\d_h)}  \|v\|_{0} }\geq
	C .}
\end{enumerate}

\begin{theorem}
 For the discrete problem (\ref{DP}), the K-ellipticity \meqref{below}
	and the discrete B-B
 condition \meqref{inf-sup} hold uniformly.
  Consequently,  the discrete
     mixed problem \meqref{DP} has a unique solution
         $(\sigma_h,~u_h)\in\Sigma_h\times V_h$.
\end{theorem}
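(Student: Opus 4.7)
The plan is to check the two Brezzi hypotheses---K-ellipticity \eqref{below} and the discrete inf-sup estimate \eqref{inf-sup}---from which unique solvability of \eqref{DP} follows by the standard saddle-point theory of \cite{Brezzi-Fortin}. The K-ellipticity is immediate from the definition of $Z_h$ in \eqref{kernel}: every $\tau\in Z_h$ satisfies $\d_h\tau=0$ pointwise on each element, so $\|\tau\|_{H(\d_h)}=\|\tau\|_0$, and the uniform positive-definiteness of the compliance tensor $A$ yields $(A\tau,\tau)\ge C\|\tau\|_0^2$, which is exactly \eqref{below}.

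For the discrete inf-sup condition I would use the \emph{explicit constructive proof} advertised in the introduction, which in $n=2$ dimensions amounts to two independent one-dimensional Raviart--Thomas solves. Given $v=(v_1,v_2)^T\in V_h$, the plan is to produce a test tensor $\tau\in\Sigma_h$ with $\sigma_{12}\equiv 0$ and with $\sigma_{11},\sigma_{22}$ obtained strip by strip. Precisely, on each horizontal row of elements of $\mathcal T_h$ I would choose $\sigma_{11}\in\operatorname{span}\{1,x\}$ per element, continuous at all interior vertical edges of the row, vanishing at the two boundary vertical edges, and satisfying $\partial_x\sigma_{11}=v_1$ on each element of the row. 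This is exactly the lowest-order 1D Raviart--Thomas problem for the mixed Poisson equation, uniquely solvable with the $h$-uniform stability bound $\|\sigma_{11}\|_0\le C\|v_1\|_0$ on the row. The symmetric procedure on each vertical column of elements gives $\sigma_{22}$ from $v_2$ with the matching estimate. Summing the row and column estimates produces $\|\tau\|_0\le C\|v\|_0$, while by construction $\d_h\tau=v$, so that $(\d_h\tau,v)=\|v\|_0^2$ and $\|\tau\|_{H(\d_h)}\le C\|v\|_0$, giving \eqref{inf-sup}.

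Two items must then be verified to close the argument. First, the tensor $\tau$ assembled this way genuinely lies in $\Sigma_h$: $\sigma_{11}$ is continuous across interior vertical edges by the row construction, $\sigma_{22}$ is continuous across interior horizontal edges by the column construction, and $\sigma_{12}\equiv 0$ is trivially continuous at all edge mid-points, so the definition \eqref{Sh} is met. Second, no compatibility condition obstructs the 1D strip solves, because the pure-displacement formulation places no essential boundary condition on $\sigma$, leaving the two endpoint values of each 1D problem free to be set to zero. The main thing to keep honest is the $h$-independence of the constant in the inf-sup bound, which follows by affine scaling from the standard $h$-uniform stability of the lowest-order 1D Raviart--Thomas element. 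I expect the real technical difficulty---the macro-element Fortin construction alluded to in the introduction---to surface only in the pure-traction case treated in Section~6, precisely because the zero-endpoint freedom exploited in the construction above is no longer available there.
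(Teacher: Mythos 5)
Your overall strategy is the paper's own: K-ellipticity is read off from the strong divergence-free property \eqref{kernel} of $Z_h$ together with the positive definiteness of $A$, and the inf-sup constant is produced by an explicit test tensor with $\tau_{12}\equiv 0$ and with $\tau_{11},\tau_{22}$ built by one-dimensional Raviart--Thomas integrations along rows and columns. There is, however, one concrete step that fails as written: you require $\sigma_{11}$ to vanish at \emph{both} boundary vertical edges of a row while satisfying $\partial_x\sigma_{11}=v_1$ on each of the $N$ elements of that row. On a row, $\sigma_{11}$ (continuous piecewise linear in $x$, constant in $y$) carries $N+1$ degrees of freedom, and you impose $N+2$ conditions; the system is solvable only under the compatibility condition $\int_0^1 v_1(x,y)\,dx=0$ on that row, which an arbitrary $v_1\in V_h$ does not satisfy. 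So the 1D strip problem as you state it is not ``uniquely solvable'' --- it is generically unsolvable --- and your closing remark that ``no compatibility condition obstructs the 1D strip solves'' is precisely where the argument breaks. The repair is immediate and is what the paper does in \eqref{tau11}: normalize $\tau_{11}$ to vanish at the left boundary edge only and let it accumulate, $\tau_{11}=h\sum_{m=1}^{i-1}V_{1,mj}+V_{1,ij}(x-x_{i-1})$, its value at the right boundary being whatever the integration produces (no boundary condition on the stress is needed for the pure displacement problem). With that single change the rest of your argument closes exactly as in the paper: the assembled $\tau$ lies in $\Sigma_h$ by \eqref{Sh}, $\d_h\tau=v$ pointwise, and the Cauchy--Schwarz estimate gives $\|\tau_{11}\|_0\le\|(v)_1\|_0$ and $\|\tau_{22}\|_0\le\|(v)_2\|_0$, hence $\|\tau\|_{H(\d_h)}^2\le 2\|v\|_0^2$ and the uniform inf-sup constant $1/\sqrt{2}$.
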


\begin{proof} It follows from (\ref{kernel}) that for all
    $\tau\in Z_h$, $\d_h\tau=0$.
  Thus $\|\d_h\tau\|_0=0$ and $\|\tau\|_{H(\d_h)}= \|\tau\|_0$.
  Since the operator $A$ is symmetric and positive definite, the
    $K$-ellipticity of the
             bilinear form $(A\tau,\tau)$ follows.

It remains to show the discrete B-B condition \meqref{inf-sup}.
   Since the usual technique based on canonical interpolations operators for discrete
    stress spaces  \cite{Arnold-Awanou-Winther,Arnold-Winther-conforming}
   is inapplicable here,  a constructive proof is adopted.
For convenience,  suppose that the domain $\Omega$
     is a unit square $[0,1]^2$ which  is triangulated evenly into
    $N^2$ elements, $\{ K_{i j} \}$.
For any $v\in V_h$, it can be decomposed as a sum,
\an{\lab{v-v-h}
   v_h=\sum\limits_{i=1}^{N}\sum\limits_{j=1}^NV_{ij}\varphi_{ij}(x,y),
}
where $\varphi_{ij}(x)$ is the characteristic function
 on the element $K_{ij}$,  and $V_{ij}= (V_{1,ij},$ $  V_{2,ij})$
$ = (v_h|_{K_{ij}})$.
A discrete stress function $\tau_h\in\Sigma_h$  will be
    constructed with
\a{
  \d_h\tau_h=v_h\text{ and }\|\tau_h\|_{H(\d_h)}\leq C\|v_h\|_0.
      }
The construction of $\tau_h$ is motivated by a simple
    proof of the inf-sup condition of the 1D Raviart-Thomas element
    for the 1D Poisson problem.
The shear stress $\tau_{12}$ can be taken zero,  i.e., $\tau_{12}\equiv 0$;
   the normal stress $\tau_{11}$ (resp. $\tau_{22}$ ) of $\tau_h$
   can be constructed so that it is independent of the second (resp. first)
   component of $v_h$.
In addition, $\tau_{11}$ (resp. $\tau_{22}$) can be a continuous
   piecewise linear function of the variable $x$ (resp. $y$)
  and a piecewise constant function of  $y$ (resp. $x$).
 Therefore, they are of form
\an{\lab{tau11} \tau_{11}(x,y)&=h\sum_{m=1}^{i-1} V_{1,mj}
	+ V_{1,ij} (x-x_{i-1}), \\
     \tau_{22}(x,y)&=h \sum_{k=1}^{j-1} V_{2,ik}
	+ V_{2,ij}(y-y_{j-1}),}
for  $x_{i-1}\le x < x_{i}$ and $y_{j-1}\le y < y_{j}$	
   ($(x_i,y_j)$ is the upper-right corner vertex of square $K_{ij}$.)
Thus, define
 \a{ \tau_{h}=\p{ \tau_{11}  &  0\\
                0 & \tau_{22} }\in \Sigma_h. }
By this construction,  $\partial_x \tau_{11}=(v_h)_1$ and
	$\partial_y \tau_{22} = (v_h)_2$.
This gives
\an{\lab{equal} {\rm div}_h\tau_{h}=v_h. }
 An elementary calculation gives
\a{
   \|v_h\|_0^2 &=\sum_{i,j=1}^N \|V_{ij}\varphi_{ij}\|_{0, K_{ij}}^2
    = \sum_{i,j=1}^N \int_{K_{ij}}|V_{ij}\varphi_{ij}|^2\,dxdy
   \\ &   =\sum_{i,j=1}^N ((V_{1,ij})^2+(V_{2,ij})^2)h^2.
}
By the Schwarz inequality,
 \a{ \|\tau_{11} \|_0^2&= \sum_{i,j=1}^N
      \int_{K_{ij}} \left(h\sum_{m=1}^{i-1} V_{1,mj}
	+ V_{1,ij} (x-x_{i-1})\right)^2\,dxd y \\
   &\le \sum_{i,j=1}^N
      \int_{K_{ij}} \left(h^2\sum_{m=1}^{i-1} (V_{1,mj})
	+ (V_{1,ij}) (x-x_{i-1})^2\right)\cdot i \, dxd y. }
 Further, since $N=1/h$ and $\int_{K_{ij}}=h^2$, \a{ \|\tau_{11} \|_0^2
    & \le  \sum_{i,j=1}^N
       \left(h^2\sum_{m=1}^{i} (V_{1,mj})^2 \right)\cdot N h^2
     \le  \sum_{j=1}^N
       \left(h^2\sum_{m=1}^{N} (V_{1,mj})^2 \right)\cdot N^2 h^2 \\
    &=  h^2 \sum_{i,j=1}^N (V_{1,ij})^2. }
A similar argument leads to
\[
\|\tau_{22}\|_0^2\leq h^2 \sum_{i,j=1}^N (V_{2,ij})^2.
\]
The combination of the aforementioned two identities and two inequalities yields
    \a{ \|\tau_h\|_{H({\rm div}_h)}^2
      &=\|\tau_h\|_0^2+\|{\rm div}_h\tau_h\|_0^2 \\
      &=\|\tau_{11}\|_0^2+\|\tau_{22}\|_0^2+ \|v_h\|_0^2 \le
       2\|v_h\|_0^2. }
Hence, for any $v_h\in V_h$, the B-B condition \meqref{inf-sup}
	 holds with $C=1/\sqrt 2$:
\a{
 \inf_{v_h\in V_h} \sup\limits_{\tau\in\Sigma_h}\frac{({\rm
    div}_h\tau,~v_h)}
     {\|\tau\|_{H({\rm div}_h)}  \|v_h\|_0 }
    \geq \inf_{v_h\in V_h} \disp\frac{\|v_h\|_0^2}
    {\sqrt{2}\|v_h\|_0^2}=\frac 1{\sqrt 2}.
   }
  This completes the proof.
\end{proof}

\section{Error analysis in 2D}
The section is devoted to the error estimate stated in Theorem  \ref{MainError}, which is based on the approximation error estimate of Theorem \ref{Approximationerror} and the consistency error estimate of Theorem \ref{consistencyError}.

In order to analyze the approximation error, for any $\tau\in H({\rm div},\Omega,\mathbb{S})\cap
    H^2(\Omega,\mathbb{S})$,  define an
interpolation
\an{\lab{interpolation} \Pi_h\sigma=\p{\Pi_{11}\sigma_{11} &
     \Pi_{12}\sigma_{12}\\
    \Pi_{12}\sigma_{12} & \Pi_{22}\sigma_{22}} \in\Sigma_h, }
where $\Pi_{11}$ and $\Pi_{22}$ are standard, satisfying, respectively,
 \an{ \lab{s11}
    \int_e\Pi_{11}\sigma_{11}ds & =\int_e\sigma_{11}ds \quad ~~{\rm
   for~any~vertical~edge}~e\in\mathcal{E}_h, \\
   \lab{s22}
   \int_e\Pi_{22}\sigma_{22}ds & =\int_e\sigma_{22}ds \quad  ~~{\rm
    for~any~horizontal~edge}~e\in\mathcal{E}_h. }

$\Pi_{12}$ is the interpolation operator
    defined in \meqref{Pi12}, from the space $H^2(\Omega)$ to $W_h$.
It is shown by Park and Sheen  \cite{Park-Sheen}  that
 \an{ \lab{IE12}|v-\Pi_{12}v|_{m,K}\leq
          Ch^{2-m}|v|_{2,K},~~~m=0,1, \quad K\in \mathcal{T}_h. }

\begin{theorem}\label{Approximationerror}
   For any $\sigma\in H^2(\Omega, \mathbb{S})$, it holds  that \a{
   \|\sigma-\Pi_h\sigma\|_0 &\leq Ch\|\sigma\|_1, \\ \|{\rm
   div_h}(\sigma-\Pi_h\sigma)\|_0 &\leq Ch\|\sigma\|_2. }
   \end{theorem}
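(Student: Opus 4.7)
The plan is to exploit the fact that $\Pi_h$ acts diagonally on the three independent scalar components $\sigma_{11},\sigma_{12},\sigma_{22}$, so that both estimates reduce to local, componentwise bounds which are then summed over $K\in\mathcal T_h$. Each scalar interpolation is moreover essentially one-dimensional in character: $\Pi_{11}$ (resp.\ $\Pi_{22}$) behaves like a 1D Raviart--Thomas projection in the $x$ (resp.\ $y$) variable tensored with the identity in the orthogonal coordinate, while $\Pi_{12}$ is the Park--Sheen interpolation for which \eqref{IE12} is already available.

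For the first inequality I would pass to the reference element $\hat K$ via $F_K$, observe that $\Pi_{11}$ and $\Pi_{22}$ reproduce $P_0$, and combine a standard Bramble--Hilbert argument with the scaling $(x,y)\leftrightarrow(\hat x,\hat y)$ to obtain $\|\sigma_{ii}-\Pi_{ii}\sigma_{ii}\|_{0,K}\le Ch\,|\sigma_{ii}|_{1,K}$ for $i=1,2$. For the off-diagonal entry, \eqref{IE12} with $m=0$ yields the even sharper bound $Ch^2\,|\sigma_{12}|_{2,K}$, which is of higher order. Squaring and summing over $K$ gives the $L^2$ estimate.

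For the divergence estimate the key observation is a commuting diagram for $\Pi_{11}$ and $\Pi_{22}$: since $\partial_x\Pi_{11}\sigma_{11}$ is constant on each $K$, integration by parts together with the edge moment condition \eqref{s11} on the two vertical edges of $K$ (on horizontal edges $n_1=0$) gives
\[
\partial_x\Pi_{11}\sigma_{11}\big|_K = \frac{1}{|K|}\int_{\partial K}\Pi_{11}\sigma_{11}\,n_1\,ds = \frac{1}{|K|}\int_{\partial K}\sigma_{11}\,n_1\,ds = \frac{1}{|K|}\int_K \partial_x\sigma_{11}\,dx\,dy.
\]
Hence $\partial_x\Pi_{11}\sigma_{11}$ coincides with the elementwise $L^2$ projection of $\partial_x\sigma_{11}$ onto constants, and the standard approximation bound yields $\|\partial_x(\sigma_{11}-\Pi_{11}\sigma_{11})\|_{0,K}\le Ch\,|\sigma_{11}|_{2,K}$. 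The analogous identity for $\Pi_{22}$ handles the $\partial_y\sigma_{22}$ term, while \eqref{IE12} with $m=1$ controls both the $\partial_x$ and $\partial_y$ contributions coming from $\sigma_{12}-\Pi_{12}\sigma_{12}$. Summing over $K$ produces the second inequality. The step I expect to require the most care is verifying the commuting identity cleanly; once it is in hand, everything else is routine scaling and Bramble--Hilbert.
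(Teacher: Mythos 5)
Your proposal is correct and follows essentially the same route as the paper: componentwise reduction, scaling plus Bramble--Hilbert for the $L^2$ bounds, the Park--Sheen estimate \eqref{IE12} for $\sigma_{12}$, and the key identity that $\partial_x\Pi_{11}\sigma_{11}$ is the elementwise mean of $\partial_x\sigma_{11}$ (which the paper derives by the same boundary-integral computation, carried out on the reference element rather than on $K$ directly). No gaps.
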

\begin{proof}
By the scaling argument and the standard approximation theory,
   the following  two estimates will be proved
  \an{\lab{IE1}
   |\sigma_{11}-\Pi_{11}\sigma_{11}|_{0,K}  & \leq
    Ch|\sigma_{11}|_{1,K} \quad \forall K\in\mathcal{T}_h, \\
    \lab{IE2} |\frac{\partial}{\partial
    x}(\sigma_{11}-\Pi_{11}\sigma_{11})|_{0,K} & \leq
    Ch|\frac{\partial{\sigma_{11}}}{\partial
     x}|_{1,K} \quad \forall K\in\mathcal{T}_h. }
For any element $K\in\mathcal{T}_h$, by \meqref{s11} (i.e.,  the
   interpolation \meqref{s11} is equivalent to a mid-point interpolation),
\a{
   \|\sigma_{11}-\Pi_{11}\sigma_{11}\|_{0,K}^2&=
    \frac{ h_xh_y} 4 \int_{\hat{K}}|\hat{\sigma}_{11}-\hat{\Pi}_{11}
     \hat{\sigma}_{11}|^2d\hat{x}d\hat{y}\\
  &\le Ch^2|\hat{\sigma}_{11}|_{1,\hat{K}}^2\le
     Ch^2|\sigma_{11}|_{1,K}^2.
   }
This is (\ref{IE1}). By the reference mapping,
\an{ \lab{IE3}
   \left\|\frac{\partial}{\partial x}(\sigma_{11}-\Pi_{11}\sigma_{11})
	\right\|_{0,K}^2&= \frac{h_y}{h_x}\int_{\hat{K}}
    |\frac{\partial}{\partial\hat{x}} (\hat{\sigma}_{11}
     -\hat{\Pi}_{11}\hat{\sigma}_{11})|^2 \, d\hat{x}d\hat{y}\\
    \nonumber & \le
      C\int_{\hat{K}}|\frac{\partial}{\partial\hat{x}}\hat{\sigma}_{11}
     -\frac{\partial}{\partial\hat{x}}\hat{\Pi}_{11}\hat{\sigma}_{11}|^2
       d\hat{x}d\hat{y}.
   }
 Now \a{ \int_{\hat{K}}\frac{\partial}{\partial\hat{x}}
     \hat{\Pi}_{11}\hat \sigma_{11}d\hat{x}d\hat{y}&=
    \int_{-1}^1\{(\hat{\Pi}_{11}\hat\sigma_{11})(1,\hat y)
      -(\hat{\Pi}_{11}\hat\sigma_{11})(-1,\hat y)\}d\hat{y}\\
   &= \int_{-1}^1\{\hat\sigma_{11}(1,\hat y)-
        \hat\sigma_{11}(-1,\hat y)\}d\hat{y}\\
   &= \int_{\hat{K}}\frac{\partial\hat{\sigma}_{11}}
             {\partial\hat{x}}\, d\hat{x}d\hat{y},
  }
This means
$\disp\frac{\partial}{\partial\hat{x}}(\hat{\Pi}_{11} \hat\sigma_{11})
   =P_{0,\hat{K}}(\frac{\partial\hat{\sigma}_{11}}{\partial\hat{x}})$,
 where $P_{0,\hat{K}}$ is the projection operator onto the constant
 space on element $\hat{K}$. A substitution  of it into (\ref{IE3}) leads to
\a{  \left\|\frac{\partial}{\partial
     x}(\sigma_{11}-\Pi_{11}\sigma_{11})\right\|_{0,K}^2
  & \leq C\left\|\frac{\partial{\hat{\sigma}_{11}}}{\partial\hat{x}}-
       P_{0,\hat{K}}
    (\frac{\partial\hat{\sigma}_{11}}{\partial\hat{x}})
      \right\|_{0,\hat{K}}^2
  \\ & \leq
     C \inf_{c\in \mathbb{R}}
   \left\|(\frac{\partial\hat{\sigma}_{11}}{\partial\hat{x}}-c )
       \right\|_{0,\hat{K}}^2.
}
By the Bramble-Hilbert Lemma,
\a{  \left\|\frac{\partial}{\partial
      x}(\sigma_{11}-\Pi_{11}\sigma_{11})\right\|_{0,K}^2\leq
     C\left|\frac{\partial\hat{\sigma}_{11}}{\partial
     \hat{x}}\right|_{1,\hat{K}}^2 \leq
      Ch^2\left|\frac{\partial\sigma_{11}}{\partial x}\right|_{1,K}^2.
 }
    This is (\ref{IE2}).

A similar argument yields
 \an{\lab{IE221}
    \|\sigma_{22}-\Pi_{22}\sigma_{22}\|_{0,K} & \leq
    Ch|\sigma_{22}|_{1,K}\quad \forall K\in\mathcal{T}_h,\\
   \lab{IE222} \left\|\frac{\partial}{\partial
     y}(\sigma_{22}-\Pi_{22}\sigma_{22})\right\|_{0,K} & \leq
     Ch\left|\frac{\partial{\sigma_{22}}}{\partial
     y}\right|_{1,K}\quad \forall K\in\mathcal{T}_h. }
  Noting that the $L^2$ norm on $\Sigma$   is
 \a{ \|\sigma \|_{0,K}^2
    = \|\sigma_{11} \|_{0,K}^2+2
	\|\sigma_{12} \|_{0,K}^2 +\|\sigma_{22}\|_{0,K}^2,
    }
A combination of the estimates (\ref{IE1}), (\ref{IE2}),
   (\ref{IE221}), (\ref{IE222}) and (\ref{IE12}),  completes the proof.
\end{proof}

\begin{theorem}\label{consistencyError}  Assume that $(\sigma, u)$ be the solution to the problem \eqref{eqn1} with $u\in H_0^1(\Omega, \mathbb{R}^2)\cap H^2(\Omega, \mathbb{R}^2)$. Then,
  \begin{equation}
  \sup\limits_{\tau_h\in\Sigma_h}\frac{(A\sigma,\tau_h)+({\rm
     div}_h\tau_h,u)}{\|\tau_h\|_{H({\rm div}_h)}}\leq Ch |u|_{2}.
  \end{equation}
\end{theorem}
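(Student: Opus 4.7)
The plan is to reduce the consistency error to an edge sum involving only jumps of $\tau_{12}$, then convert that edge sum to a volume integral amenable to a Poincar\'e estimate in one direction.

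Since $A\sigma=\epsilon(u)$ and $\tau_h$ is symmetric, elementwise integration by parts gives
\[(A\sigma,\tau_h)+({\rm div}_h\tau_h,u)=\sum_{K\in\mathcal{T}_h}\int_{\partial K} u\cdot(\tau_h n_K)\,ds.\]
With $u=0$ on $\partial\Omega$, this is a sum over interior edges of $\int_e u\cdot[\tau_h n]\,ds$. The continuity of $\tau_{11}$ across every vertical edge and of $\tau_{22}$ across every horizontal edge cancels all contributions except
\[E(\tau_h)=\sum_{e\in\mathcal{E}_{h,V}}\int_e u_2[\tau_{12}]\,ds+\sum_{e\in\mathcal{E}_{h,H}}\int_e u_1[\tau_{12}]\,ds,\]
which is the promised decomposition into two copies of a 2D scalar consistency problem.

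Focus on the vertical sum. Midpoint continuity of $W_h$ forces $[\tau_{12}]|_e=\alpha_e(y-y_m)$ with $\alpha_e=(\partial_y\tau_{12})|_{K_R}-(\partial_y\tau_{12})|_{K_L}$, so $\int_e u_2[\tau_{12}]\,ds=\alpha_e M_e$ where $M_e:=\int_e u_2\,(y-y_m)\,dy$. Regrouping this edge sum by element and applying the fundamental theorem of calculus in $x$ (writing $u_2(x_{i-1},y)-u_2(x_i,y)=-\int_{x_{i-1}}^{x_i}\partial_x u_2\,dx$) yields
\[\sum_{e\in\mathcal{E}_{h,V}}\int_e u_2[\tau_{12}]\,ds=-\int_\Omega \partial_y\tau_{12}\cdot\partial_x u_2\cdot\psi_y\,dx\,dy,\]
where $\psi_y(x,y)=y-y_m(K(x,y))$ is the piecewise $y$-oscillation, bounded by $h/2$. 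On each $K$, $\partial_y\tau_{12}$ is constant and $(y-y_m)$ has zero $y$-mean, so one may subtract the $y$-average of $\partial_x u_2$ (a function of $x$ alone) from the integrand without changing the integral. Poincar\'e in the $y$-direction bounds the remainder by $Ch|u|_{2,K}$, while $\|\psi_y\|_{0,K}\le Ch^2$; Cauchy--Schwarz then yields a contribution of order $h^2\|\partial_y\tau_{12}\|_{0,K}|u|_{2,K}$ per element, and summing together with the inverse inequality $\|\partial_y\tau_{12}\|_0\le Ch^{-1}\|\tau_{12}\|_0\le Ch^{-1}\|\tau_h\|_{H({\rm div}_h)}$ delivers the desired $Ch|u|_2\|\tau_h\|_{H({\rm div}_h)}$ bound. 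The horizontal sum is treated identically by exchanging $x$ and $y$.

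The main obstacle is that $[\tau_{12}]|_e$ is only orthogonal to constants, not linears, on each edge, so a purely edgewise Cauchy--Schwarz estimate combining $\|u_2-c\|_{0,e}\lesssim h^{1/2}|u|_{1,K_e^*}$ with the polynomial trace bound $\|[\tau_{12}]\|_{0,e}\lesssim h^{-1/2}\|\tau_{12}\|_{0,K_e^*}$ would only produce an $O(1)$ estimate and lose the needed factor of $h$. The resolution---the ``superconvergence of the consistency error'' advertised in the introduction---is the global reorganization above, which converts the edge defect into a volume pairing against the oscillatory factor $y-y_m$ and extracts the missing power of $h$ from the piecewise-constant structure of $\partial_y\tau_{12}$.
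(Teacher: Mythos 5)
Your proof is correct, and it reaches the same $O(h^2)\,|\tau_{12}|_{1,h}\,|u|_2$ superconvergence bound (finished off by the same inverse inequality) as the paper, but the middle of the argument is organized genuinely differently. The paper keeps the error as a sum over pairs of opposite element edges: after subtracting the edge means $J_e u_i$ (legitimate because the jump of $\tau_{12}$ vanishes at midpoints, exactly your observation) and the element mean $J_K\tau_{12}$, it maps to the reference element and writes the difference of the two edge integrals as an explicit iterated integral against $\partial_{\hat x}\partial_{\hat y}\hat u_1$, from which Cauchy--Schwarz gives the $h^2$ bound. You instead sum by parts globally: writing $[\tau_{12}]|_e=\alpha_e(y-y_m)$ and regrouping by element converts the edge sum into the volume pairing $-\int_\Omega \partial_y\tau_{12}\,\partial_x u_2\,(y-y_m)\,dx\,dy$, and the second power of $h$ is then extracted from the orthogonality of $y-y_m$ to $y$-constants together with a directional Poincar\'e inequality, rather than from an explicit reference-element identity. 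The two routes exploit the same two cancellations (jump orthogonal to constants on each edge, and a difference across the element producing a mixed second derivative of $u$), but your volume-integral reformulation is arguably cleaner to verify and makes the mechanism of the superconvergence more transparent, while the paper's element-by-element computation generalizes verbatim to the $n$-dimensional case as a sum of two-dimensional identities, which is how the authors use it in Section 5. Your diagnosis of why a naive edgewise trace estimate only yields $O(1)$ is also exactly the point the paper is making when it speaks of the superconvergence of the consistency error.
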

\begin{proof} It follows from the first equation of \eqref{eqn1} that $A\sigma=\frac{1}{2}(\nabla u+\nabla u^T)$ for the exact
    solution $u\in H_0^1(\Omega,~\mathbb{R}^2)$.  An elementwise integration by parts gives
 \a{
   (\epsilon(u),\tau_h)=-({\rm
     div}_h\tau_h,u)+\sum\limits_{K\in\mathcal{T}_h}\int_{\partial K}\tau_h
    n\cdot uds\quad \forall\tau_h\in\Sigma_h, }
which implies
 \an{\lab{eqn602}
  (A\sigma,\tau_h)+({\rm
   div}_h\tau_h,u)&
  =\sum\limits_{K\in\mathcal{T}_h}\int_{\partial K}\tau_h n\cdot uds.
   }

\begin{figure}[htb]
   \begin{center}\setlength{\unitlength}{1.3pt}
   \begin{picture}(70,95)(0,-17)\put(0,0){\line(1,0){60}}
  \put(0,0){\line(0,1){60}} \put(60,60){\line(0,-1){60}}
   \put(25,25){$K$}
  \put(60,60){\line(-1,0){60}}\put(30,60){\circle*{3}}
 \put(30,0){\circle*{3}}\put(0,30){\circle*{3}}\put(60,30){\circle*{3}}
  \put(-60,28){$\p{-\tau_{11}\\-\tau_{12}}, e_{4,K}$}
   \put(2,28){$m_{4,K}$}
  \put(63,28){$\p{ \tau_{11}\\ \tau_{12}}, e_{2,K}$}
  \put(35,28){$m_{2,K}$}
  \put(10,-15){$\p{-\tau_{12}\\-\tau_{22}}, e_{1,K}$}
    \put(22,4){$m_{1,K}$}
   \put(10,73){$\p{ \tau_{12}\\ \tau_{22}}, e_{3,K}$}
   \put(22,47){$m_{3,K}$}
  \end{picture}
   \end{center}
\caption{ $\tau_h\cdot n$ on the four edges of element $K$,
  cf. \meqref{cons5}. }    \lab{4edge}
\end{figure}
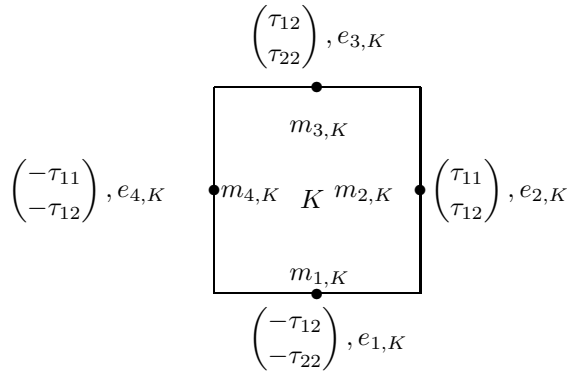

Let $\tau_h|_K=\p{\tau_{11}& \tau_{12}\\  \tau_{21}& \tau_{22}}$,
   cf. Figure \mref{4edge}.
Since $\tau_{11}$ is continuous in the $x$-direction and
  $\tau_{22}$ is continuous in the $y$-direction,
   there is a cancellation for these two components on
    the inter-element boundary.
 Since  $u\in
   H^2(\Omega,\mathbb{R}^2)\cap H^1_0(\Omega, \mathbb{R}^2)$,
  \an{\lab{cons5}
   &\quad \ \sum\limits_{K\in\mathcal{T}_h}\int_{\partial K}
             \tau_h\cdot nuds\\
   \nonumber
    &=\sum\limits_{K\in\mathcal{T}_h}
     \Big[ (\int_{e_{2,K}}-\int_{e_{4,K}}) \tau_{12}u_2ds
    + (\int_{e_{1,K}}-\int_{e_{3,K}}) \tau_{12}u_1ds\Big]. }
For any $v\in H^1(K)$, define
    the $L^2$-projection operator
     $\disp J_{e }$ on an edge $e$  by
  \a{
       J_{e }v=\frac{1}{|e |}\int_{e }vds.
   }
Because $\tau_{12}$ is continuous at the mid-point of all edges,
   it follows that, including boundary edges where $u_i=0$, on
   the horizontal edges $\mathcal{E}_{h,H}$,
\a{
  \sum\limits_{K\in\mathcal{T}_h}
     (\int_{e_{1,K}}-\int_{e_{3,K}}) \tau_{12}u_1ds
  &= \nonumber
    \sum\limits_{e\in\mathcal{E}_{h,H}}
     \int_{e } (\tau_{12}|_{e_+} - \tau_{12}|_{e_-} )u_1ds   \\
   &= \nonumber
    \sum\limits_{e\in\mathcal{E}_{h,H}}
     \int_{e } (\tau_{12}|_{e_+} - \tau_{12}|_{e_-} )(u_1-J_{e} u_1) ds.}
After inserting a same constant
    $J_K \tau_{12}=\int_K \tau_{12} dx dy / |K|$ into
    the two integrals on two horizontal edges of one element $K$,
 the sum  can be rewritten as
\an{\lab{cons1}
   &\quad \ \sum\limits_{K\in\mathcal{T}_h}
     (\int_{e_{1,K}}-\int_{e_{3,K}}) \tau_{12}u_1ds   \\
 &= \nonumber \sum\limits_{K\in\mathcal{T}_h}
      \int_{e_{1,K}}\tau_{12} (u_1-J_{e_{1,K}} u_1) ds
      -\int_{e_{3,K}} \tau_{12} (u_1-J_{e_{3,K}} u_1)   ds  \\
 &= \nonumber \sum\limits_{K\in\mathcal{T}_h}
      \int_{e_{1,K}}(\tau_{12} - J_K \tau_{12} )
	    (u_1-J_{e_{1,K}} u_1) ds \\
  &\nonumber \qquad
       -\int_{e_{3,K}} (\tau_{12}-J_{K} \tau_{12})
          (u_1-J_{e_{3,K}} u_1)   ds. }
  There is some superconvergence property for the two terms
 in (\ref{cons1}) if they are considered together.
   In fact, on the reference element  $\hat K$, $\hat\tau_{12}(\hat x, \pm 1)
    = \hat\tau_{12} (0,0) + \hat x \partial_{\hat x} \hat\tau_{12}(0,0)
       \pm \partial_{\hat y} \hat\tau_{12}(0,0)$,
	and $J_{\hat K} \hat \tau_{12} = \hat\tau_{12} (0,0)$.  The  property  of $J_e$  gives
\a{ & \quad \
      \frac 12  \int_{-1}^1  (\hat \tau_{12}- J_{\hat K} \hat \tau_{12} )
      \Big[ (\hat u_1- \hat J_{\hat e_1} \hat u_1)(\hat x, -1)
         -(\hat u_1- \hat J_{\hat e_3} \hat u_1)(\hat x, 1)\Big] d\hat x \\
    & = -\frac12 \int_{-1}^1  \hat x
            \frac{\partial}{\partial\hat x} \hat\tau_{12}
           \Big[\int_{-1}^1 \frac{\partial}{\partial \hat y }
	    \hat u_1d\hat y -
             \frac 12 \int_{-1}^1 (\hat u(\hat x,1)
	             -\hat u(\hat x,-1) ) d \hat x
	\Big] d \hat x  \\
    & = -\frac12 \int_{-1}^1  \hat x
            \frac{\partial}{\partial\hat x} \hat\tau_{12}
           \Big[\int_{-1}^1 \frac{\partial}{\partial \hat y }
	    \hat u_1d\hat y -
             \frac 12 \int_{-1}^1(\int_{-1}^1 \frac{\partial}{\partial \hat y }
               \hat u(\hat x,\hat y)d\hat y ) d \hat x
	\Big] d \hat x  \\
   & = -\frac14 \int_{-1}^1  \hat x
            \frac{\partial}{\partial\hat x} \hat\tau_{12}
           \Big[\int_{-1}^1 \int_{-1}^1 ( \frac{\partial}{\partial \hat y }
	    \hat u_1(\hat x,\hat y) -  \frac{\partial}{\partial \hat y }
               \hat u(\hat t, \hat y)) d\hat y   d \hat t
	\Big] d \hat x  \\
    & = - \frac 14\int_{-1}^1  \hat x
      \frac{\partial}{\partial\hat x} \hat\tau_{12}
           \Big[\int_{-1}^1 \int_{-1}^1 \int_{\hat t}^{\hat x}
              \frac{\partial^2}{\partial\hat x\partial \hat y }
             \hat u_1 (\hat s, \hat y)
            d \hat s d\hat y  d \hat t
	\Big] d \hat x  .  }
   By the Schwarz inequality and  \meqref{cons1},
  \a{ & \quad \
    \left| \sum\limits_{K\in\mathcal{T}_h}
     (\int_{e_{1,K}}-\int_{e_{3,K}}) \tau_{12}u_1ds   \right|^2
  \\ & = \frac {h^2}{2^2} \left| \sum\limits_{K\in\mathcal{T}_h}
	  \int_{-1}^1  \frac{\partial}{\partial\hat x} \hat\tau_{12} \hat x
           \Big[\int_{-1}^1 \int_{-1}^1 \int_{\hat t}^{\hat x}
              \frac{\partial^2}{\partial\hat x\partial \hat y }
             \hat u_1 (\hat s, \hat y)
            d \hat s d\hat y  d \hat t
	\Big] d \hat x  \right|^2
  \\   & \le C   h^2  \left(\sum\limits_{K\in\mathcal{T}_h}
         \left\|\frac{\partial}{\partial\hat x} \hat\tau_{12}
          \right\|_{0,\hat K}^2 \right) \left(
           \sum\limits_{K\in\mathcal{T}_h}
         \left\|\frac{\partial^2}{\partial\hat x\partial\hat y}
             \hat u_1 \right\|_{0,\hat K}^2 \right)   \\
    & =  C   h^2  \left( \left\|\frac{\partial}{\partial  x}  \tau_{12}
          \right\|_{0}^2 \frac{ h^2 }{2^2}
         \left\|\frac{\partial^2}{\partial  x\partial  y}
               u_1 \right\|_{0 }^2 \right) \\
	& \le C h^4 |  \tau_{12}  |_{1,h}^2   | u_1  |_{2}^2.      }
Here $|  \cdot  |_{1,h}$ is the elementwise semi-$H^1$ norm.
A similar argument bounds the other term in \meqref{cons5} by
 \a{ \left| \sum\limits_{K\in\mathcal{T}_h}
      (\int_{e_{2,K}}-\int_{e_{4,K}}) \tau_{12}u_2ds \right|
    \le C h^2  |  \tau_{12} |_{1,h}   | u_2  |_{2} .      }
A combination of these two estimates with (\ref{eqn602}) implies
\a{  |(A\sigma,\tau_h)+({\rm
     div}_h\tau_h,u)|\leq Ch^2|u|_2|\tau_h|_{1,h}.}
By the inverse inequality,
   \an{ \lab{c2}
    |(A\sigma,\tau_h)+({\rm
     div}_h\tau_h,u)|  \leq Ch|u|_2\|\tau_h\|_0.
  }
\end{proof}

\begin{theorem}\label{MainError}  Let
  $(\sigma, u)\in\Sigma\times V$ be the exact solution of
   problem \meqref{eqn1} and $(\tau_h, u_h)\in\Sigma_h\times
   V_h$ the finite element solution of \meqref{DP}.  Then
\a{   \|\sigma-\sigma_h\|_0
    &\le Ch(\|u\|_2+\|\sigma\|_2),\\
     \|{\rm div}_h(\sigma-\sigma_h)\|_0
    &\le Ch(\|u\|_2+\|\sigma\|_2),\\
      \|u-u_h\|_0&\le     Ch(\|u\|_2+\|\sigma\|_2).
      }
\end{theorem}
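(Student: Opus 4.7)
The plan is to follow a standard Strang-type argument for mixed methods, adapted to the nonconforming setting so that the consistency error from Theorem \ref{consistencyError} can be fed in alongside the discrete inf-sup condition from Theorem 3.1 and the interpolation bounds from Theorem \ref{Approximationerror}. First I would test \eqref{eqn1} against $\tau_h\in\Sigma_h$ and $v_h\in V_h$ and subtract the discrete problem \eqref{DP}, obtaining the error equations $(A(\sigma-\sigma_h),\tau_h)+({\rm div}_h\tau_h,u-u_h)=E_c(\tau_h)$ and $({\rm div}_h(\sigma-\sigma_h),v_h)=0$, where $E_c(\tau_h):=(A\sigma,\tau_h)+({\rm div}_h\tau_h,u)$ is exactly the quantity bounded by $Ch|u|_2\|\tau_h\|_0$ in Theorem \ref{consistencyError}; the divergence equation uses $f={\rm div}\,\sigma$ together with the inclusion ${\rm div}_h\Sigma_h\subset V_h$.

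Next I would split $\sigma-\sigma_h=\eta+\xi_h$ with $\eta:=\sigma-\Pi_h\sigma$ and $\xi_h:=\Pi_h\sigma-\sigma_h\in\Sigma_h$. Since ${\rm div}_h\xi_h\in V_h$, the choice $v_h={\rm div}_h\xi_h$ in the second error equation gives $\|{\rm div}_h\xi_h\|_0\le\|{\rm div}_h\eta\|_0\le Ch\|\sigma\|_2$ by Theorem \ref{Approximationerror}, and the triangle inequality closes the estimate on $\|{\rm div}_h(\sigma-\sigma_h)\|_0$.

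The main obstacle is the $L^2$ stress bound, because $\Pi_h$ does not commute exactly with the divergence ($\Pi_{12}$ preserves vertex values at edge midpoints rather than edge averages), so $\xi_h$ is in general \emph{not} in $Z_h$ and the K-ellipticity estimate cannot be applied to it directly. I would resolve this by using the discrete inf-sup condition to produce $z_h\in\Sigma_h$ with ${\rm div}_h z_h={\rm div}_h\xi_h$ and $\|z_h\|_{H({\rm div}_h)}\le C\|{\rm div}_h\xi_h\|_0\le Ch\|\sigma\|_2$; then $\xi_h-z_h\in Z_h$. Testing the first error equation with $\tau_h=\xi_h-z_h$ kills the displacement coupling, and combining the pointwise positive-definiteness of $A$ with the bound on $E_c(\xi_h-z_h)$ from Theorem \ref{consistencyError}, the approximation bound on $\|\eta\|_0$ from Theorem \ref{Approximationerror}, and the size of $\|z_h\|_0$, via Young's inequality, yields $\|\xi_h\|_0\le Ch(\|u\|_2+\|\sigma\|_2)$; the stress estimate then follows by the triangle inequality.

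Finally, for the displacement error I would decompose $u-u_h=(u-P_hu)+(P_hu-u_h)$, where $P_h$ is the elementwise $L^2$-projection onto $V_h$; the first piece is $O(h|u|_1)$ by standard approximation. For the second piece I would invoke the inf-sup condition once more to construct $\tau_h\in\Sigma_h$ with ${\rm div}_h\tau_h=P_hu-u_h$ and $\|\tau_h\|_{H({\rm div}_h)}\le C\|P_hu-u_h\|_0$. Using ${\rm div}_h\tau_h\in V_h$ together with the first error equation expresses $\|P_hu-u_h\|_0^2$ in terms of $E_c(\tau_h)$ and $(A(\sigma-\sigma_h),\tau_h)$, both already controlled by $Ch(\|u\|_2+\|\sigma\|_2)\|\tau_h\|_{H({\rm div}_h)}$, which closes the displacement estimate.
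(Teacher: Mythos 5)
Your proposal is correct and is essentially the paper's own argument in a slightly reorganized form: the paper first proves the abstract quasi-optimal bound over the affine set $Z_f$ and then uses the discrete B-B condition to build the divergence-matching correction $\gamma_h$ (your $z_h$, up to sign), whereas you work directly with the error equations and $\xi_h=\Pi_h\sigma-\sigma_h$; the ingredients — K-ellipticity on the divergence-free part, the inf-sup construction, $\Pi_h\sigma$ with Theorem \ref{Approximationerror}, the consistency bound of Theorem \ref{consistencyError}, and the $P_h$-based duality step for $u-u_h$ — coincide exactly. No gaps.
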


\begin{proof}
   Let \a{ Z_f=\{\tau\in\Sigma_h \ | \ ({\rm
    div}_h\tau,v)=(f,v)\quad \forall v\in V_h\}. }
The finite element solution $\sigma_h$ is in $Z_f$.
Thus, for any $\tau\in Z_f$, it holds $\sigma_h-\tau\in Z_h$, i.e.,
   \a{{\rm  div}_h(\sigma_h-\tau)=0. }
It follows from the $K$-ellipticity  (cf. \meqref{below})
that, for all $\tau\in Z_f$,
 \a{ C\|\sigma_h-\tau\|_0^2&\le
         (A(\sigma_h-\tau),\sigma_h-\tau)\\
   &= (A(\sigma-\tau),\sigma_h-\tau)+(A(\sigma_h-\sigma),\sigma_h-\tau)\\
    &=(A(\sigma-\tau),\sigma_h-\tau)-(A\sigma,\sigma_h-\tau)-({\rm
    div}_h(\sigma_h-\tau),u_h)\\
    &=(A(\sigma-\tau),\sigma_h-\tau)-(A\sigma,\sigma_h-\tau)
    \\
    &=(A(\sigma-\tau),\sigma_h-\tau)-(A\sigma,\sigma_h-\tau)
     -({\rm
    div}_h(\sigma_h-\tau),u).
     }
An application of the Schwarz inequality leads to
\a{  \|\sigma_h-\tau\|_{H({\rm div}_h)} &=\|\sigma_h-\tau\|_0 \\
  &\le  C \|\sigma-\tau\|_{H({\rm div}_h)}-
        \frac{(A\sigma,\sigma_h-\tau)+({\rm
    div}_h(\sigma_h-\tau),u)}{ C\|\sigma_h-\tau\|_{H({\rm div}_h)}}\\
  &=  C \|\sigma-\tau\|_{H({\rm div}_h)}+
   \sup\limits_{\tau_h\in\Sigma_h}\frac{(A\sigma, \tau_h)+({\rm
    div}_h \tau_h,u )}{C \| \tau_h\|_{H({\rm div}_h)}} .  }
By the triangle inequality,
\an{\lab {ER2}
    \|\sigma-\sigma_h\|_{H({\rm div}_h)}\leq
   C\{\inf\limits_{\tau\in Z_f}\|\sigma-\tau\|_{H({\rm div}_h)}
   +\sup\limits_{\tau_h\in\Sigma_h}\frac{(A\sigma,\tau_h)+({\rm
     div}_h\tau_h,u)}{\|\tau_h\|_{H({\rm div}_h)}}\}.
    }
For a given $\tau_h\in \Sigma_h$, the discrete B-B condition \meqref{inf-sup}
    ensures that the following problem has at least one solution
     $\gamma_h\in\Sigma_h$,
    cf. \cite{Brezzi-Fortin},
  \an{\lab {ER1}
({\rm div}_h \gamma_h,v_h)=({\rm div}_h(\sigma-\tau_h),v_h)
	\quad \forall~v_h\in V_h. }
It follows from the B-B condition \meqref{inf-sup} that
\a{
      \|\gamma_h\|_{H({\rm div}_h)}&\le
     \frac{1}{C}\sup\limits_{v_h\in V_h}\frac{({\rm
     div}_h\gamma_h,~v_h)}{\|v_h\|_0}
    = \frac{1}{C}\sup\limits_{v_h\in V_h}\frac{({\rm
       div}_h(\sigma-\tau_h,~v_h)}{\|v_h\|_0}\\
    &\le \frac{1}{C}\|{\rm div}_h(\sigma-\tau_h)\|_0. }
The identity \meqref{ER1} asserts that $\gamma_h+\tau_h\in Z_f$.
  The choice $\tau=\gamma_h+\tau_h$ in \meqref{ER2} leads to
\a{ & \quad \ \|\sigma-\sigma_h\|_{H({\rm div}_h)}\\
   &\le  C\{\|\sigma-\tau_h\|_{H({\rm div}_h)}
    +\|\gamma\|_{H({\rm div}_h)}
    +\sup\limits_{\tau_h\in\Sigma_h}\frac{(A\sigma,\tau_h)+({\rm
     div}_h\tau_h,u)}{\|\tau_h\|_{H({\rm div}_h)}} \}\\
   &\le  C\{\|\sigma-\tau_h\|_{H({\rm div}_h)}
         +\sup\limits_{\tau_h\in\Sigma_h}\frac{(A\sigma,\tau_h)+({\rm
     div}_h\tau_h,u)}{\|\tau_h\|_{H({\rm div}_h)}}  \}. }
That is,
  \an{ \lab{abstract}
    \|\sigma-\sigma_h\|_{H({\rm div}_h)}\leq
    C\{\inf\limits_{\tau_h\in\Sigma_h}\|\sigma-\tau_h\|_{H({\rm
    div}_h)}+\sup\limits_{\tau\in\Sigma_h}\frac{(A\sigma,\tau)+({\rm
     div}_h\tau,u)}{\|\tau\|_{H({\rm div}_h)}}\}. }
The first term on the right-hand side of \eqref{abstract} is the approximation error.
The choice $\tau_h=\Pi_h\sigma$ with Theorem \ref{Approximationerror}
 gives its upper bound. The second term on the right-hand side of
  \eqref{abstract} is the usual consistency error for the nonconforming finite
 element method, which has already been bounded in Theorem \ref{consistencyError}.
 A  combination of  these two theorems  implies
\a{ \|\sigma-\sigma_h\|_0 & \leq Ch(\|u\|_2+\|\sigma\|_2), \\
   \|{\rm div}_h(\sigma-\sigma_h)\|_0 & \leq Ch(\|u\|_2+\|\sigma\|_2). }

The rest of the proof is concerned with the estimation of $u-u_h$. In view of the discrete
B-B Condition \meqref{inf-sup}, it holds, for any $v\in V_h$,
\a{
 &\quad \ C\|u_h-v\|_0 \\
  &\le \sup\limits_{\tau\in\Sigma_h}\frac{({\rm
    div}_h\tau,u_h-v)}{\|\tau\|_{H({\rm div}_h )}}
   =\sup\limits_{\tau\in\Sigma_h}\frac{({\rm
     div}_h\tau,u_h-u+u-v)}{\|\tau\|_{H({\rm div}_h )}}\\
&\leq \sup\limits_{\tau\in\Sigma_h}\frac{({\rm
   div}_h\tau,u_h-u)}{\|\tau\|_{H({\rm
    div}_h )}}+\|u-v\|_0\\
&= \sup\limits_{\tau\in\Sigma_h}\frac{({\rm
div}_h\tau,u_h)+(A\sigma,\tau)-(A\sigma,\tau)-({\rm
div}_h\tau,u )}{\|\tau\|_{H({\rm div}_h )}}+\|u-v\|_0\\
&= \sup\limits_{\tau\in\Sigma_h}\frac{(A(\sigma-\sigma_h),\tau)
   -(A\sigma,\tau)-({\rm
    div}_h\tau,u )}{\|\tau\|_{H({\rm div}_h )}}+\|u-v\|_0\\
&\le \sup\limits_{\tau\in\Sigma_h}\frac{(A\sigma,\tau)+({\rm
   div}_h\tau,u )}{\|\tau\|_{H({\rm div}_h )}}+
    C(\|\sigma-\sigma_h\|_0+\|u-v\|_0).
   }
 By \meqref{c2}  and the error estimation of
   $\|\sigma-\sigma_h\|_0$,  the triangle inequality plus
    $v=P_hu$  ($P_h$ is the $L^2$ projection into piecewise constant
      spaces)  yield
\a{
\|u-u_h\|_0&\le \|u-P_hu\|_0+\|P_hu-u_h\|_0\\
&\le Ch|u|_2 + C(\|\sigma-\sigma_h\|_0+\|u-P_hu\|_0) \\
&\le Ch(\|u\|_2+\|\sigma\|_2).
}
That completes  the proof of this theorem.
\end{proof}

\section{The minimal element in any spatial dimension}
Assume the domain $\Omega$ is a unit hypercube
   $[0,1]^n$ in the
   $n$-dimensional space, which is subdivided by a uniform rectangular
     grid of $N^n$ cubes:
   \a{ {\mathcal T}_h &:=\{ K_{i_1,i_2,...,i_n}
	=[(i_1-1)h,i_1h]\times \cdots [(i_n-1)h,i_nh], \\
	 &\qquad\qquad 1\le i_1,\cdots i_n\le N; \ h=1/N\}. }
The set of all $(n-1)$-dimensional face hyperplanes of the triangulation
  $\mathcal{T}_h$ that are perpendicular to the axis $x_i$ is denoted
   by $\mathcal{E}_{n-1,i}$. That is
   \a{ \mathcal{E}_{n-1,i}&=\{ [(i_1-1)h,i_1h]\times \cdots\times
        [(i_{i-1}-1)h,i_{i-1}h]\times \{i_ih\} \\
   &\qquad \times \cdots\times
        [(i_n-1)h,i_nh], \ 1\le i_1,\cdots i_n\le N, \ 0\le i_i\le N \}. }
The internal hyperplanes are denoted by
   \a{ \mathcal{E}_{n-1,i}(\Omega) & =\mathcal{E}_{n-1,i}\cap \Omega.}
 The set of all  $(n-2)$-dimensional mid-surface hyperplanes
	(orthogonal to both $x_i$ and $x_j$ axes) are denoted  by
  \a{
  \mathcal{E}_{n-2, ij}&=\{ [(i_1-1)h,i_1h]\times \cdots\times
     \{i_ih\} \times \cdots\times  \{(i_j-\frac 12)h\}\times \cdots\times \\
      &\qquad [(i_n-1)h,i_nh],
      \  1\le i_1,\cdots i_n\le N, \ 0\le i_i\le N \}\\
   &\cup\{ [(i_1-1)h,i_1h]\times \cdots\times
   \{(i_i-\frac 12) h\} \times \cdots\times  \{ i_j h\}\times \cdots\times \\
      &\qquad [(i_n-1)h,i_nh], \  1\le i_1,\cdots i_n\le N, \ 0\le i_i\le N \}.
   }
   In addition,  define $\mathcal{E}_{n-2,ij}(K):=\mathcal{E}_{n-2, ij}
      \cap \partial K$ for any $K\in\mathcal{T}_h$.
 In 2D, these sets are
  \a{ \mathcal{E}_{1,1} &=\{ \hbox{all edges in $\mathcal{T}_h$ perpendicular
       to $x_1$ } \}, \\
      \mathcal{E}_{1,2} &=\{ \hbox{all edges in $\mathcal{T}_h$ perpendicular
       to $x_2$ } \}, \\
      \mathcal{E}_{0,12} &=\{ \hbox{all mid-points of edges in
           $\mathcal{T}_h$  } \}.}
  In 3D, they are
 \a{ \mathcal{E}_{2,i} &=\{ \hbox{all squares in $\mathcal{T}_h$ perpendicular
       to $x_i$ } \}, \ 1\le i\le 3, \\
      \mathcal{E}_{1,ij} &=\{ \hbox{all mid-square edges of squares
         in $\mathcal{E}_{2,i}$ and $\mathcal{E}_{2,j}$, }\\
         &\qquad  \hbox{  parallel  to $x_k$ } \},\
	 i\ne j\ne k\in\{1,2,3\}.}

  In $n$ space-dimension,  the symmetric tensor space is
  defined in \meqref{S}.
The discrete stress space is  defined by
   \an{\lab{S-n}
   \ad{ \Sigma_h
	& := \Big\{ \p{\tau_{ij } }_{n\times n} \in L^2(\Omega,
	\mathbb{R}^{n\times n} )
	\ \Big| \ \tau_{ij} =\tau_{ji};  \\
       &\qquad
          \tau_{ii}|_K \in \operatorname{span}\{1, x_i  \},\tau_{ii}
     \hbox{ is continuous  on  $E_i\in\mathcal{E}_{n-1,i}$;
            }   \\
            &\qquad
       \tau_{ij}|_K \in \operatorname{span}\{1, x_i, x_j  \}, \
          \tau_{ij} \hbox{ is continuous on $E_{ij}\in\mathcal{E}_{n-2, ij}
              (\Omega)$} \Big\}.
    }
    }
Some comments are in order for this family of minimal finite element spaces.
\begin{remark} The normal stress $\tau_{ii}$ is a constant on
   each $(n-1)$-dimensional hyper-plane $E_i\in\mathcal{E}_{n-1,i}$.
In addition, for the case $n=1$, $\Sigma_h$ is
$$
      \{\tau_{11}\in L^2(\Omega, \mathbb{R})\ \big | \
    \tau|_K\in\text{span}\{1, x\} \text{ is continuous at the nodes }\}
   \subset H^1(\Omega),
$$
    the 1D Raviart-Thomas space, which is the only conforming space in
   this family.
   \end{remark}

\begin{remark} The dimension of the space
\begin{equation*}
  \begin{split}
  \Sigma_{h,ij}:= \{\tau_{ij}\in L^2(\Omega, \mathbb{R})\ \big | \
      & \tau_{ij}|_K \in \operatorname{span}\{1, x_i, x_j  \}, \\
      &    \tau_{ij} \hbox{ is continuous on
          $E_{ij}\in\mathcal{E}_{n-2, ij}(\Omega)$} \}
   \end{split}
   \end{equation*}
is \a{  N^{n-2}((n+1)^2-1)=N^n+2N^{n-1}. }
    see \cite{Park-Sheen} for more details for 2D.
\end{remark}
  Let us give the local basis for $\tau_{ii}$ and but
  a local frame (not basis) for $\tau_{ij}$ on an
   element $K:=K_{i_1,i_2,\dots,i_n}\in \mathcal{T}_h$.
Define, for $(x_1,\dots,x_n)\in K$,
  \a{  \psi^{(k)}_{ii,K} (x_1,\dots,x_n) &=
         \hat \psi^{(k)}\left( \frac{ x_{i } - (i_i-1/2)h }{ h /2 } \right),
     \quad k=0,1,  }
  where \a{ \hat\psi^{(0)} (\hat x) &= \frac {1-\hat x}2, &
        \hat\psi^{(1)} (\hat x) &= \frac {1+\hat x}2,
     \qquad \hat x\in [-1,1]. }
Define, for $ k=0,1,2,3,$ for $(x_1,\dots,x_n)\in K$,
   \a{  \phi^{(k)}_{ij,K} (x_1,\dots,x_n) &=
         \hat \phi^{(k)}\left( \frac{ x_{i } - (i_i-1/2)h }{ h /2 },
         \frac{ x_{j} - (i_j-1/2)h }{ h /2 }  \right),  }
  where  (cf. Figure \mref{basis}), for $(\hat x, \hat y)\in [-1,1]^2$,
   \a{ \hat\phi^{(0)} (\hat x,\hat y) &= \frac {1-\hat x-\hat y}4, &
        \hat\phi^{(1)} (\hat x,\hat y) &= \frac {1+\hat x-\hat y}4, \\
    \hat\phi^{(2)} (\hat x,\hat y) &= \frac {1+\hat x+\hat y}4, &
        \hat\phi^{(3)} (\hat x,\hat y) &= \frac {1-\hat x +\hat y}4. }
Note that the above four functions are not linearly independent.
In fact,
  \a{ \hat\phi^{(0)}- \hat\phi^{(1)}  + \hat\phi^{(2)} -\hat\phi^{(3)}
	\equiv 0. }

Then  the finite element space can be alternatively defined by
   \an{\lab{S-n-2} \Sigma_h
	& = \Big\{ \p{\tau_{ij } }_{n\times n} \in L^2(\Omega,
	\mathbb{R}^{n\times n} )
	\ \Big| \ \tau_{ij}=\tau_{ji};   \\
     \nonumber   &\qquad
	 \tau_{ii}|_K = \sum_{k=0}^1
   \tau_{ii}(E_{n-1, i}^{(k)}(K)) \psi^{(k)}_{ii,K} (x_1,\dots,x_n);\\
        \nonumber    &\qquad
        \tau_{ij}|_K  =\sum_{k=0}^3 p_{ij}(\hat{E}^{(k)}_{n-2}(K))
     \phi^{(k)}_{ij,K} (x_1,\dots,x_n)
         \Big\}. }
Here $\tau_{ii}(\hat{E}_{n-1, i}^{(k)}(K))$ are the  values of $\tau_{ii}$
   at the centers of the $(n-1)$-dimensional hyperplanes of
	$K=K_{i_1,\cdots,i_n}$:
  \a{ \hat{E}_{n-1, i}^{(k)}(K)
	  =\p{ (i_1-\frac 12)h \\ \vdots \\
             (i_{i-1}-\frac 12)h \\ (i_i - k)h \\
	    \vdots \\ (i_{n}-\frac 12)h}, \ k=0,1; }
  $p_{ij}(\hat{E}^{(k)}_{n-2}(K))\in\mathbb{R}$ are some parameters
   associated to the center-point of four
     $(n-2)$-dimensional hyperplanes of $K$
     which are continuous on the four (two on the boundary)
    $n$-cubes sharing the  point:
    \a{ \hat{E}_{n-2}^{(k)}(K) =\p{ (i_1-\frac 12)h \\ \vdots \\
              (i_i - 0)h \\\vdots \\  (i_j - 0)h \\
	    \vdots \\ (i_{n}-\frac 12)h}, \
         \p{ (i_1-\frac 12)h \\ \vdots \\
              (i_i - 1)h \\\vdots \\  (i_j - 0)h \\
	    \vdots \\ (i_{n}-\frac 12)h},  \
         \p{ (i_1-\frac 12)h \\ \vdots \\
              (i_i - 1)h \\\vdots \\  (i_j - 1)h \\
	    \vdots \\ (i_{n}-\frac 12)h},  \
         \p{ (i_1-\frac 12)h \\ \vdots \\
              (i_i - 0)h \\\vdots \\  (i_j - 1)h \\
	    \vdots \\ (i_{n}-\frac 12)h}.}
As in 2D,  the discrete displacement space is
   \an{\lab{V-n}
      V_h =\{ v\in L^2(\Omega,\mathbb{R}^n) \ | \
	v|_K \hbox{ \ is a constant vector } \}. }
In particular,  the dof of the 3D mixed element is plotted in Figure \mref{three}.

In the $n$-dimension,  since $\d_h\Sigma_h\subset V_h$,
  the K-ellipticity \meqref{below} is proved exactly the
same way as in 2D. The explicit construction proof of
  the discrete B-B condition \meqref{inf-sup} can be divided into
 $n$ essentially 1-dimensional construction proofs
   similar to that for the 1D Raviart-Thomas element of the
   1D Poisson equation, see Section 3
  for more details for 2D. For the consistency error in \meqref{ER2},
  the proof remains the same
  except  there is a multiple summation instead of 2-index summation.
  All the analysis in 2D remains the same for $n$-D.

\section{The pure traction problem}
   This section considers the pure traction problem,   i.e.,  the stress space is subject to zero Neumann boundary
  condition while no boundary condition on the displacement.  In practice,  part of elasticity body should be located, i.e,
    the displacement has a Dirichlet boundary condition on  some non-zero measure boundary.  But the pure traction problem
   is the most difficult one in mathematical analysis.  A similar  proof for Theorem \mref{inf0}  can  prove it for partial displacement problems.
   For ease of presentation,  details are  presented only for  two dimensions.  Note that the argument in any dimension is similar.  The main idea is  to use the macro-element  technique where we  construct a mass-preserving quasi-interpolation operator.

Let $\text{RM }$ be the rigid motion space in  two dimensions, which reads
$$
\text{RM}:=\text{span}\bigg\{\p{1\\ 0}, \p{0\\ 1}, \p{y\\ -x}\bigg\}.
$$
Consider a pure traction problem:
\begin{subequations}
 \an{\lab{e0}
    \d (A^{-1} \epsilon (u) ) &= f  \quad \hbox{ in } \ \Omega=(0,1)^2,\\
     \epsilon (u) \cdot  n &=0 \quad\hbox{ on } \ \Gamma=\partial\Omega, \\
     ( u, v) &=0  \quad \forall v\in \text{RM}.}
 \end{subequations}
By the same discretization of uniform square grid $\Cal T_h$ with $h=1/N$
 as in \S2,  the finite element equations \meqref{DP} remain the same except
 the spaces are changed with boundary and rigid-motion free conditions:
\a{  \ad{ (A\sigma_h, \tau) + (\d_h \tau, u_h) & = 0 && \forall\tau \in \Sigma_{h,0}, \\
     (\d_h \sigma_h, v) &=(f,v) && \forall v \in V_{h,0}, } }
where
   \an{ \lab{S-h-0}
    \Sigma_{h,0}
     &=\{ \sigma=\p{\sigma_{11} & \sigma_{12} \\ \sigma_{12} & \sigma_{22} }\in  \Sigma_h \mid
      \sigma(m_e)\cdot n=0 \ \quad  \forall e\in(\Cal E_{h}\cap\Gamma)\}, \\
	\lab{V-h-0}
	 V_{h,0}
     &=\{ v=\p{v_1 \\v_2 }\in V_h \mid
       \ (v, w)=0 \quad \forall w\in \text{RM}
       \}.
     	}
Here $m_e$ is the mid-point of an edge $e$,  and $\Sigma_h$ and
    $V_h$ are defined in \meqref{Sh} and \meqref{Vh}, respectively.
The earlier  analysis remains the same except the discrete B-B condition \meqref{inf-sup}
 as the stress space $\Sigma_{h,0}$ is much smaller than before.

\def\sq{\begin{picture}(40,40)(0,0)
   \multiput(0,0)(0,20){3}{\line(1,0){40}}
   \multiput(0,0)(20,0){3}{\line(0,1){40}}\end{picture}}

 \begin{theorem}\lab{inf0}
  The following discrete B-B condition holds uniformly,
   \a{ \inf_{v_h\in V_{h,0}} \sup_{\sigma_h \in \Sigma_{h,0}}
   \frac{(\d_h \sigma_h, v_h) } { \|\sigma_h\|_{H(\d_h)} \|v_h\|_0 }
    \ge C. }
\end{theorem}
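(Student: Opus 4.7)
My plan is to apply a Fortin-type argument: reduce the discrete inf-sup bound to constructing a stable interpolation operator $\Pi_h^F$ from the space of traction-free symmetric tensors $H^1(\Omega,\mathbb{S})_N := \{\tau \in H^1(\Omega,\mathbb{S}) : \tau n|_{\partial\Omega} = 0\}$ into $\Sigma_{h,0}$ satisfying
\begin{equation*}
(\operatorname{div}_h(\Pi_h^F \tau - \tau), v_h) = 0 \quad \forall v_h \in V_{h,0}, \qquad \|\Pi_h^F \tau\|_{H(\operatorname{div}_h)} \le C\|\tau\|_1.
\end{equation*}
Combined with the continuous inf-sup for pure-traction elasticity, which supplies, for every $v_h \in V_{h,0}$, a tensor $\tau \in H^1(\Omega,\mathbb{S})_N$ with $\operatorname{div}\tau = v_h$ and $\|\tau\|_1 \le C\|v_h\|_0$ (the compatibility condition $v_h \perp \text{RM}$ is precisely what makes this solvable), the theorem follows from the routine chain
\begin{equation*}
\sup_{\sigma_h \in \Sigma_{h,0}} \frac{(\operatorname{div}_h \sigma_h, v_h)}{\|\sigma_h\|_{H(\operatorname{div}_h)}} \ge \frac{(\operatorname{div}_h \Pi_h^F \tau, v_h)}{\|\Pi_h^F \tau\|_{H(\operatorname{div}_h)}} = \frac{\|v_h\|_0^2}{\|\Pi_h^F \tau\|_{H(\operatorname{div}_h)}} \ge \frac{\|v_h\|_0}{C}.
\end{equation*}

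For $\Pi_h^F$ I would begin with the canonical interpolation $\Pi_h$ of \eqref{interpolation}: for $\tau \in H^1(\Omega,\mathbb{S})_N$, the edge-average conditions \eqref{s11}--\eqref{s22} on boundary edges and the vertex values of $\tau_{12}$ used in \eqref{Pi12} automatically land $\Pi_h \tau$ in $\Sigma_{h,0}$, and Theorem \ref{Approximationerror} gives $H^1$-stability. However, $\Pi_h$ alone does not satisfy the divergence moment identity, and so I would add a correction via the promised macro-element technique. The idea is to tile $\mathcal{T}_h$ by non-overlapping macros (say $2\times 2$ interior blocks, with adapted $2\times 2$ or $1\times 2$ boundary macros near $\partial\Omega$), and on each macro $\mathcal{M}$ introduce a local correction space of stresses in $\Sigma_{h,0}$ supported in $\mathcal{M}$ whose normal moments on interior edges of $\partial\mathcal{M}$ and $\tau_{12}$ vertex values on $\partial\mathcal{M}$ vanish. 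A dimension count together with an explicit construction should then show that the local divergence map surjects onto the piecewise-constant test functions on $\mathcal{M}$, quotiented by the local rigid motions for interior macros (with no such quotient needed for boundary-adjacent macros), and admits a right-inverse of scaling-invariant norm by rescaling to a reference macro.

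The main obstacle, and where the novelty lies, is precisely this macro-element surjectivity together with its uniform stability. Unlike the pure-displacement case of Section 3, the homogeneous traction constraint $\sigma_h(m_e)\cdot n = 0$ on $\partial\Omega$ severely reduces the stress degrees of freedom, so one cannot correct the divergence one cell at a time; on any single macro the kernel of the continuous divergence is the three-dimensional rigid-motion space, and this local kernel is matched globally by the condition $v_h \in V_{h,0}$. Establishing that the chosen macros simultaneously (i) tile $\Omega$, (ii) locally surject onto the correct quotient of piecewise-constant vectors, and (iii) produce a right-inverse uniformly bounded in $h$, is the technical heart of the argument; I would execute (ii)--(iii) by direct enumeration of local degrees of freedom on the reference macro followed by a Piola-type rescaling. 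Once this mass-preserving quasi-interpolation $\Pi_h^F$ is in hand, the bound $\|\Pi_h^F \tau\|_{H(\operatorname{div}_h)} \le C\|\tau\|_1$ results from the stability of $\Pi_h$ plus a standard estimate on the macro corrections, and the displayed inf-sup chain above concludes the proof.
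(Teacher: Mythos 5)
Your overall architecture coincides with the paper's: solve a continuous pure-traction problem to produce an $H^1$ stress with the prescribed divergence, interpolate it into $\Sigma_{h,0}$, and then repair the divergence defect macro-element by macro-element on $2\times 2$ blocks, where the local correction (built from internal degrees of freedom only) can reach exactly the piecewise constants modulo the local rigid motions. The explicit enumeration you propose for the local surjectivity and its scaling-invariant right inverse is exactly what the paper carries out with the five basis functions of $M_{ij}$ in Figure \ref{5-m} and their explicit stress preimages $\sigma_{m,ij}$.

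There is, however, a genuine gap at the hinge between your two stages. The local correction on a macro $\mathcal{M}$ can only produce divergences orthogonal to the three local rigid motions on $\mathcal{M}$, so the defect left by the first-stage interpolant must itself be orthogonal to the local rigid motions on \emph{every} macro --- roughly $3\times (N/2)^2$ linear conditions. You assert that ``this local kernel is matched globally by the condition $v_h\in V_{h,0}$,'' but $v_h\perp \mathrm{RM}$ supplies only three global conditions and says nothing about the local rigid-motion moments of $\operatorname{div}_h\Pi_h\tau - v_h$ on individual macros. The canonical interpolation \eqref{interpolation} does not preserve these moments, because the vertex-value operator $\Pi_{12}$ does not preserve edge averages of $\sigma_{12}$ across macro boundaries. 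This is precisely why the paper replaces $\Pi_{12}$ by the mass-preserving quasi-interpolation $\tilde\Pi_{12}=\Pi_{12}I_h^c$ of \eqref{masspreservein}, built from a Scott--Zhang operator corrected at macro-edge midpoints and macro centers so that \eqref{masspreserveproperty} holds; those edge-mass identities, combined with the edge-average properties \eqref{s11}--\eqref{s22} of $\Pi_{11},\Pi_{22}$, are what force $v_h-\operatorname{div}_h\tilde\sigma_h\perp R_{ij}$ on every macro. Without some such modification the two-stage construction does not close. Two secondary points: $\Pi_{12}$ requires vertex values and is only defined on $H^2$, whereas your Fortin operator must act on $H^1(\Omega,\mathbb{S})$ tensors (the Scott--Zhang regularization also resolves this); and your parenthetical that boundary-adjacent macros need no rigid-motion quotient is not right --- the traction condition removes boundary stress degrees of freedom rather than adding them, and the paper quotients by $R_{ij}$ on every macro, boundary ones included.
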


\begin{proof} Let  $v_h =((v_h)_1, (v_h)_2)
   = \sum (v_h)_{ij} \varphi_{ij}$  as in \meqref{v-v-h},
   where $(v_h)_{ij}$ $  = ((v_h)_{1,ij},$ $ (v_h)_{2,ij})$ is the constant value of
   $v_h$ on square $K_{ij}$.
With the boundary condition on the stress,  it is impossible to match $ (v_h)_1 $ by $\partial_x \tau_{11}$ alone as
   in \meqref{tau11}.
That is,  because the dof of $ (v_h)_1 $ is $n^2-1.5$ (due to a mixed constraint
   with the second component $ (v_h)_2$),
but the dof of $\{\tau_{11}\}$ is only
    $n(n-1)$. This indicates  that the help from $\partial_y\tau_{12}$ is indispensable.
But the traditional trick of interpolating smooth B-B stress function
    does not work here as $(\tau_{12})$ does not have enough dof.
In other words,  the support of $\tau_{21}$ is non-local,
  at least on four neighboring squares.  Given $v_h$,
   a discrete B-B stress function will be constructed  in
  two steps. First, a macro-element technique will produce a $\tilde \sigma_h$ globally
   so that $v_h - \d_h \tilde \sigma_h$ is rigid-motion free on each
    $(2\times 2)$  macro-element $K_{2i,2j,2h}:=[x_{2i}, x_{2i+2}]\times
    [y_{2j}, y_{2j+2}]$.
In a second step,  construct, macro-element by macro-element,
    a $  {\bar \sigma}_h$ locally by internal dof only,
   so that $\d_h {\bar \sigma}_h= v_h - \d_h \tilde \sigma_h$.

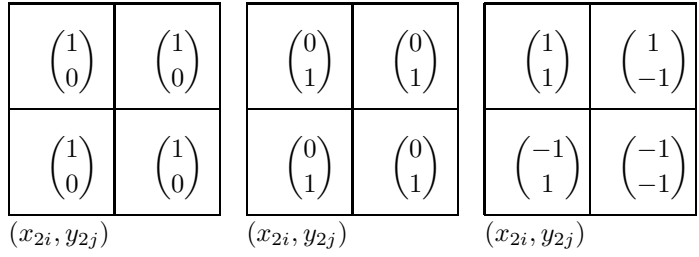
\begin{figure}[htb]
   \begin{center}\setlength{\unitlength}{2pt}
   \begin{picture}(130,40)(0,0)
  \multiput(0,0)(45,0){3}{\sq} \multiput(0,-5)(45,0){3}{$(x_{2i},y_{2j})$}
\put(0,0){\begin{picture}(60,60)(0,0)
   \put(15,29){\makebox(-6,0){$\p{1\\0}$}}   \put(35,29){\makebox(-6,0){$\p{1\\0}$}}
      \put(15,9){\makebox(-6,0){$\p{1\\0}$}} \put(35,9){\makebox(-6,0){$\p{1\\0}$}}
     \end{picture}}
\put(45,0){\begin{picture}(60,60)(0,0)
   \put(15,29){\makebox(-6,0){$\p{0\\1}$}}  \put(35,29){\makebox(-6,0){$\p{0\\1}$}}
      \put(15,9){\makebox(-6,0){$\p{0\\1}$}} \put(35,9){\makebox(-6,0){$\p{0\\1}$}}
     \end{picture}}
\put(90,0){\begin{picture}(60,60)(0,0)
    \put(15,29){\makebox(-6,0){$\p{1\\1}$}}  \put(35,29){\makebox(-6,0){$\p{1\\-1}$}}
      \put(15,9){\makebox(-6,0){$\p{-1\\1}$}} \put(35,9){\makebox(-6,0){$\p{-1\\-1}$}}
     \end{picture}}
   \end{picture}
   \end{center}
\caption{ \lab{f3}    Nodal values of three orthogonal basis functions,
   $\{\phi^r_{m,ij}, m=1,2,3\}$, of the
     rigid-motion space on macro-element $K_{2i,2j,2h}:=[x_{2i}, x_{2i+2}]\times
    [y_{2j}, y_{2j+2}]$, cf. \meqref{r3}. }
\end{figure}

To this end,  define a local rigid-motion space  on each
   macro-element $K_{2i,2j,2h}$
\an{\lab{r3} R_{ij} = \operatorname{span}
    \{ \phi^r_{1,ij},\phi^r_{2,ij},\phi^r_{3,ij}\}, }
  where $\phi^r_{1,ij}$ are defined in Figure \mref{f3}, piecewise constant
   functions. Assume $N$ is an even integer and decompose $v_h$ into two parts, a local rigid-motion and a
   global  rigid-motion-free part,
\an{ \lab{2p}
    v_h = \tilde v_h + \bar v_h,
     \quad  \tilde v_h=P_{L^2 ( R_{ij} )} v_h
         \ \hbox{ for } 0\le i,j\le N/2-1. }
Here the projection $P_{L^2 ( R_{ij} )} v_h$ is defined as
   \a{ \int_{K_{2i,2j,2h}} P_{L^2 ( R_{ij} )} v_h \cdot \phi^r_{m,ij} dx\,dy=\int_{K_{2i,2j,2h}} v_h \cdot \phi^r_{m,ij} dx\,dy,
    \quad m=1,2,3. }
		
To construct $\tilde \sigma_h$, consider the pure traction PDE \meqref{e0} with $f=\tilde v_h$ with the
 solution $u\in H^2(\Omega)$. Let
\a{ \sigma = A^{-1} \epsilon(u) \in H^1(\Omega). }
Then \an{\lab{s-v} \d \sigma =\tilde v_h, \quad \|\sigma  \|_{H{(\d)}}
     \le C\|\tilde v_h\|_0. }

     \begin{figure}[htb]
   \begin{center}\setlength{\unitlength}{0.8pt}
   \begin{picture}(380,120)(0,0)
   \multiput(0,0)(130,0){3}{\begin{picture}(40,40)(0,0)
        \multiput(0,0)(40,0){3}{\multiput(0,0)(0,40){3}{\sq}} \end{picture}}
  \def\sqs{\begin{picture}(40,40)(0,0)
   \put(1,1){\line(1,0){38}}\put(1,39){\line(1,0){38}}
   \put(1,1){\line(0,1){38}}\put(39,1){\line(0,1){38}} \end{picture}}
   \multiput(0,0)(130,0){3}{\begin{picture}(40,40)(0,0)
     \multiput(0,0)(40,0){3}{\multiput(0,0)(0,40){3}{\sqs}}\end{picture}}
   \multiput(20,20)(20,0){5}{\multiput(0,0)(0,20){5}{\circle*{4}}}
      \multiput(150,40)(40,0){3}{\multiput(0,0)(0,40){2}{\circle{5}}}
      \multiput(170,20)(40,0){2}{\multiput(0,0)(0,40){3}{\circle{5}}}
      \put(172,23){\tiny $m_E$}
   \multiput(280,20)(40,0){3}{\multiput(0,0)(0,40){3}{\circle{5}}}
   \thicklines
   \put(260,20){\line(1,0){40}}
  \put(280,0){\line(0,1){40}}
  \put(281,22){\tiny $m_c$}
   \end{picture}
   \end{center}
\caption{ \lab{3I}
   Interpolation nodes for the Scott-Zhang $C^0$-$Q_1$ $I_h$,
    $I_{h}^E$, cf. \eqref{E}  and $I_{h}^c$, cf. \eqref{M}.   }
\end{figure}
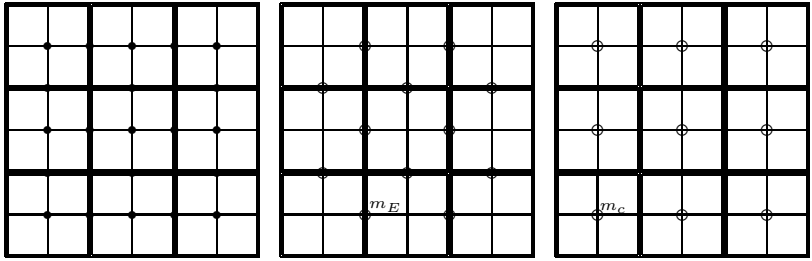
For the analysis, we need a mass-preserving quasi-interpolation operator.  This will be achieved in four steps.
First, let $I_h$  be  the boundary-condition preserving Scott-Zhang operator from \cite{Scott-Zhang}, which
interpolates $H^1_0$ functions to $C^0_0(\Omega)$-$Q_1(\Cal T_h)$  functions, shown in Figure \mref{3I}.
Then,  we correct the mid-point values of edges  of  macro-elements  to get
a mass-preserving on each edge of each macro-element $K_{2i,2j,2h}$.  Let $m_E$ be the
mid-point of edge $E$ of $K_{2i,2j,2h}$, which is also a vertex of $\mathcal{T}_h$, define the associated  nodal basis function
 of the conforming bilinear element by
 $$
 \theta_E(m_E)=1,
 \quad    \theta_E(q)=0 \text{ for other vertexes $q$  of } \mathcal{T}_h.
 $$
 Let
 $$
 c_E=\int_E(v-I_hv)ds\bigg/\int_E\theta_E ds.
 $$
 Define $I_{h}^E:  H_0^1(\Omega)\rightarrow  C^0_0(\Omega)$-$Q_1(\Cal T_h)$ by
 \begin{equation}\label{E}
 I_{h}^Ev=I_{h}v+\sum\limits_{E}c_E\theta_E.
 \end{equation}
 Third, we correct the center value of $I_{h}^Ev$ on each  macro-element.  Let $m_c$
 be the center of macro-element $K_{2i,2j,2h}$, which is also a vertex of $\mathcal{T}_h$.
Let the $Q_1$ nodal basis function $\theta_{ij}$ for vertex $m_c$
  be similarly defined as $\theta_E$.  Define
  $$
  c_{ij}=\big(\int_{E_{1, ij}}(v-I_{2,h}v)ds + \int_{E_{2, ij}}(v-I_{2,h}v)ds\big) \bigg/\big(\int_{E_{1, ij}}\theta_{ij}ds + \int_{E_{2, ij}}\theta_{ij} ds\big),
  $$
  where $E_{1,ij}=[x_{2i}, x_{2i+2}]\times \{y_{2j+1}\}$ and $E_{2, ij}=\{x_{2i+1}\}
    \times [y_{2j}, y_{2j+2}]$ are two intervals in the interior of $K_{2i,2j,2h}$
   that take  $m_c$ as their mid-points,  cf. Figure \mref{3I}.
 Define $I_{h}^c:    H_0^1(\Omega)\rightarrow  C^0_0(\Omega)$-$Q_1(\Cal T_h)$  by
 \begin{equation}\label{M}
 I_{h}^cv=I_{h}^Ev+\sum\limits_{ij}c_{ij}\theta_{ij}.
 \end{equation}
 Finally, define $\tilde{\Pi}_{12}:H^1_0(\Omega)\rightarrow W_{h,0}:=\{w\in W_h| \ \ w(m_e)=0\ \forall e\in \mathcal{E}_h\cap \Gamma\}$ by
 \begin{equation}\label{masspreservein}
 \tilde{\Pi}_{12}v:=\Pi_{12} I_{h}^cv \text{ for any }v\in H_0^1(\Omega).
 \end{equation}
 Since $\int_e \Pi_{12} I_{h}^cv ds =\int_e  I_{h}^cv ds $ for any $e\in\mathcal{E}_h$ ,  the definition of the interpolation operator $\tilde{\Pi}_{12}$ leads to
 \begin{equation}\label{masspreserveproperty}
 \int_E\tilde{\Pi}_{12} vds=\int_Evds\text{ and } \int_{E_{1,ij}}(v-\tilde{\Pi}_{12}v)ds + \int_{E_{2,ij}}(v-\tilde{\Pi}_{12}v)ds=0,
 \end{equation}
 for  any  $E\subset \partial K_{2i,2j,2h}$  and  any macro-element $K_{2i,2j,2h}$. In addition,
 \begin{equation}\label{bounded}
 \|\nabla_h \tilde{\Pi}_{12} v \|_0\leq C\|\nabla v\|_0.
 \end{equation}

Then  $\tilde \sigma_h$ is defined as
\an{ \lab{b-s1}\tilde \sigma_{11} &= \Pi_{11} \sigma_{11}, \\
    \tilde \sigma_{22} &= \Pi_{22} \sigma_{22}, \\
    \lab{b-s} \tilde \sigma_{12} &= \tilde{\Pi}_{12} \sigma_{12},}
 where $\Pi_{11}$, $\Pi_{22}$ and $\tilde{\Pi}_{12}$ are defined in
  \meqref{s11}, \meqref{s22} and \meqref{masspreservein}, respectively.

We verify next, for $\tilde\sigma_h$ defined in \meqref{b-s1}--\meqref{b-s},
\a{   \int_{K_{2i,2j,2h}} (\d_h \tilde\sigma_h-v_h)\cdot
       \phi^r_{m,ij}dx\,dy = 0, \quad m=1,2,3, }
 for $0\le i, j < N/2$.
Note that $\d_h\tilde\sigma_h\ne \tilde v_h$ in general,
   though $\d \sigma =\tilde v_h$. From \meqref{s11}, \meqref{s22}, \meqref{masspreservein} and  \meqref{masspreserveproperty}, and integrations
    by parts it follows
\a{   & \int_{x_{2i}}^{x_{2i+2}}\int_{y_{2j}}^{y_{2j+2}}
        \d_h( \sigma -\tilde \sigma_h)\cdot \phi^r_{1,ij} dy\, dx \\
   &= \int_{y_{2j}}^{y_{2j+2}}(I-\Pi_{11}) [ \sigma_{11}(x_{2i+2},y)
            -\sigma_{11}(x_{2i},y) ] \,dy \\
  &\quad  +
       \int_{x_{2i}}^{x_{2i+2}} (I-\tilde{\Pi}_{12})[ \sigma_{12}(x,y_{2j+2})
            -\sigma_{12}(x,y_{2j}) ] \,dx =0.
   }
   Symmetrically, \a{ \int_{x_{2i}}^{x_{2i+2}}\int_{y_{2j}}^{y_{2j+2}}
       \d_h ( \sigma -\sigma_h )\cdot \phi^r_{2,ij} dy\, dx =0. }
For the last preserved value, as $\d \sigma=\tilde v_h$ pointwise,  from \meqref{s11}, \meqref{s22}, \meqref{masspreservein} and  \meqref{masspreserveproperty},  and integrations
    by parts  it follows
  \a{  &	\int_{x_{2i}}^{x_{2i+2}}\int_{y_{2j}}^{y_{2j+2}}
       \d_h (\sigma -\tilde{\sigma}_h )\cdot \phi^r_{3,ij} dy\, dx\\
        =&  \int_{y_{2j+1}}^{y_{2j+2}}(I-\Pi_{11}) [ \sigma_{11}(x_{2i+2},y)
               -\sigma_{11}(x_{2i},y) ] \,dy  \\
	&\quad -\int_{y_{2j}}^{y_{2j+1}}(I-\Pi_{11}) [ \sigma_{11}(x_{2i+2},y)
               -\sigma_{11}(x_{2i},y) ] \,dy  \\
   &\ + \int_{x_{2i}}^{x_{2i+1}}(I-\Pi_{22}) [ \sigma_{22}(x,y_{2j+2})
               -\sigma_{22}(x,y_{2j}) ] \,dx   \\
   &\quad -    \int_{x_{2i+1}}^{x_{2i+2}}(I-\Pi_{22}) [ \sigma_{22}(x,y_{2j+2})
            -\sigma_{22}(x,y_{2j}) ] \,dx\\
   & \ + \int_{x_{2i}}^{x_{2i+2}}(I-\tilde{\Pi}_{12}) [ \sigma_{12}(x,y_{2j+2})
             +\sigma_{12}(x,y_{2j})  -2\sigma_{12}(x,y_{2j+1}) ] \,dx  \\
   & \ + \int_{y_{2j}}^{y_{2j+2}}(I-\tilde{\Pi}_{12}) [ 2\sigma_{12}(x_{2i+1},y)
                -\sigma_{12}(x_{2i+2},y)-\sigma_{12}(x_{2i},y) ] \,dy=0 \,.
                 }

Thus
  \a{   \Big[v_h - \d_h   \tilde \sigma_h\Big]_{K_{2i,2j,2h}}
           \perp R_{ij} \quad 0\le i,j<N/2. }
We match next $[v_h - \d_h   \tilde \sigma_h ]$
     on each macro-element $K_{2i,2j,2h}$ by
    the divergence of  internal 5 dof of discrete stress:
 \a{  \bar\sigma_{11,2i+1,2j+\frac12}, \
     \bar\sigma_{11,2i+1,2j+\frac32}, \
     \bar\sigma_{12,2i+1,2j+\frac12},\
     \bar\sigma_{22,2i+\frac12,2j+1} \ \hbox{ and }
     \bar\sigma_{22,2i+\frac32,2j+1}, }
 where $\bar{\sigma}_{11,2i+1,2j+\frac12}$ denotes the value of $\bar\sigma_{11}$ at $((2i+1)h, (2j+\frac12)h)$ and other notations
  are defined similarly. Note that the four mid-edge values of $\bar \sigma_{12}$ are the same.
Here on each macro-element, $[v_h - \d_h   \tilde \sigma_h ]$ is
   in the following space
\an{ \lab{s-M} M_{ij} = \operatorname{span}
    \{ \phi^c_{m,ij}, \ m=1,2,3,4,5\} }
where $\phi^c_{m,ij}$ are defined in Figure \mref{5-m}.

\begin{figure}[htb]
   \begin{center}\setlength{\unitlength}{1.4pt}
   \begin{picture}(220,43)(0,0)
\def\sq{\begin{picture}(40,40)(0,0)
   \multiput(0,0)(0,20){3}{\line(1,0){40}}
   \multiput(0,0)(20,0){3}{\line(0,1){40}}\end{picture}}

  \multiput(0,0)(45,0){5}{\sq}
\put(0,0){\begin{picture}(60,60)(0,0)
   \put(12,29){\makebox(-6,0){$\p{0\\0}$}}   \put(32,29){\makebox(-6,0){$\p{0\\0}$}}
      \put(12,9){\makebox(-6,0){$\p{-1\\0}$}} \put(32,9){\makebox(-6,0){$\p{1\\0}$}}
     \end{picture}}
\put(45,0){\begin{picture}(60,60)(0,0)
   \put(12,29){\makebox(-6,0){$\p{1\\0}$}}   \put(32,29){\makebox(-6,0){$\p{-1\\0}$}}
      \put(12,9){\makebox(-6,0){$\p{0\\0}$}} \put(32,9){\makebox(-6,0){$\p{0\\0}$}}
     \end{picture}}
\put(90,0){\begin{picture}(60,60)(0,0)
    \put(12,29){\makebox(-6,0){$\p{0\\0}$}}   \put(32,29){\makebox(-6,0){$\p{1\\0}$}}
      \put(12,9){\makebox(-6,0){$\p{-1\\-1}$}} \put(32,9){\makebox(-6,0){$\p{0\\1}$}}
     \end{picture}}
\put(135,0){\begin{picture}(60,60)(0,0)
    \put(12,29){\makebox(-6,0){$\p{0\\0}$}}   \put(32,29){\makebox(-6,0){$\p{1\\1}$}}
      \put(12,9){\makebox(-6,0){$\p{-1\\-1}$}} \put(32,9){\makebox(-6,0){$\p{0\\0}$}}
     \end{picture}}
\put(180,0){\begin{picture}(60,60)(0,0)
    \put(12,29){\makebox(-6,0){$\p{0\\1}$}}   \put(32,29){\makebox(-6,0){$\p{0\\0}$}}
      \put(12,9){\makebox(-6,0){$\p{0\\-1}$}} \put(32,9){\makebox(-6,0){$\p{0\\0}$}}
     \end{picture}}
   \end{picture}
   \end{center}
\caption{ \lab{5-m}    Nodal values of  basis functions
   $\{\phi^c_{m,ij}, \ 1\le m\le 5\}$ in $M_{ij}$
    on macro-element $x_{2i}\le x\le x_{2i+2}$,
    $y_{2j}\le y\le y_{2j+2}$, cf. \meqref{s-M}. }
\end{figure}
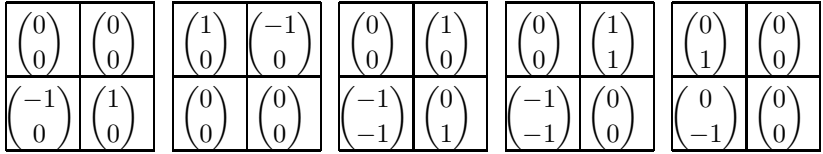

\mcomment{
restart: n:=2: eq1:=0:eq2:=0:eq3:=0:
for i from 1 to n do for j from 1 to n do eq1:=eq1+u[i,j]:
   eq2:=eq2+v[i,j]: eq3:=eq3+u[i,j]*(j-1/2)/n-v[i,j]*(i-1/2)/n: od: od:
ss:=solve({eq1=0,eq2=0,eq3=0},{u[n,n],u[n,n-1],v[n,n]}):

for i from 1 to n do a[1,i]:=0: a[n+1,i]:=0: b[i,1]:=0: b[i,n+1]:=0: od:
for i from 1 to n+1 do c[i,1]:=0: c[1,i]:=0: c[i,n+1]:=0: c[n+1,i]:=0: od:
es:=0:
for i from 1 to n do for j from 1 to n do
   e1[i,j]:=(a[i+1,j]-a[i,j])+((c[i+1,j+1]+c[i,j+1])/2-(c[i+1,j]+c[i,j])/2)=u[i,j]:
   e2[i,j]:=(b[i,j+1]-b[i,j])+((c[i+1,j+1]+c[i+1,j])/2-(c[i,j+1]+c[i,j])/2)=v[i,j]:
   es:=es,e1[i,j],e2[i,j]: od: od:
uns:=c[2,2]:
for i from 2 to n do for j from 1 to n do uns:=uns,a[i,j],b[j,i]: od: od:
for i from 2 to n do for j from 2 to n do uns:=uns,c[i,j]: od: od:
e1:=subs(ss,{es})[]:
ss:=u[1,1]=0: for i from 1 to n-1 do for j from 1 to n do ss:=ss,u[i,j]=0,v[j,i]=0: od: od:
e2:=subs(ss,{e1})[]:
sc:=0: for i from 1 to n-2 do for j from 1 to n-2 do sc:=sc,c[i+1, j+1]=0: od: od:
solve({e2,sc},{uns});

eq1:=u11+u12+u13+u21+u22+u23+u31+u32+u33=0,v11+v12+v13+v21+v22+v23+v31+v32+v33=0,
     (1/6)*(u11+u21+u31)+(3/6)*(u12+u22+u32)+(5/6)*(u13+u23+u33)
    =(1/6)*(v11+v12+v13)+(3/6)*(v21+v22+v23)+(5/6)*(v31+v32+v33);
eq2:=(a21*3+c22*3/2)=u11,((a31-a21)*3+(c22+c32)*3/4)=u21,
     (-a31*3+c32*3/2)=u31,
     (a22*3+(c23-c22)*3/2)=u11,((a31-a21)*3+(c22+c32)*3/4)=u21,
     (-a31*3+c32*3/2)=u31,
   }

On each macro-element,  define 5 stress functions to match the
   5 basis functions of $M_{ij}$ such that
  \a{  \d_h \sigma_{m,ij} =  \phi^c_{m,ij}. }
Each such a function is denoted by a vector of its nodal values:
\a{ \frac 1h \sigma_{m,ij}= \p{ \bar{\sigma}_{11,2i+1,2j+1/2} \\
                                 \bar{\sigma}_{11,2i+1,2j+3/2} \\
				 \bar{\sigma}_{12,2i+1/2,2j+1/2} \\
				\bar{\sigma}_{22,2i+1/2,2j+1}\\
                                 \bar{\sigma}_{22,2i+3/2,2j+1} }
	=\p{ -1\\0\\0\\0\\0},
	  \p{0\\1\\0\\0\\0}, \frac12 \p{-1\\-1\\-1\\-1\\1},
	  \frac 12 \p{-1\\-1\\-1\\-1\\-1},\frac 12 \p{0\\0\\0\\-1\\0},
	  }
for $1\le m\le 5$. A linear expansion $  \Big[v_h - \d_h   \tilde \sigma_h\Big]_{K_{2i,2j,2h}}
    =\sum\limits_{m=1}^5 c_{m,ij} \phi^c_{m,ij}$ defines $\bar \sigma_h (x,y)$ by
\a{  \bar \sigma_h (x,y)=\sum_{m=1}^5 c_{m,ij}\sigma_{m,ij},
	\quad (x,y)\in K_{2i,2j,2h}. }
Thus
\begin{equation}\label{s20}
 \d_h \bar \sigma_h = v_h -  \d_h   \tilde{\sigma}_h \text{ and } \| \bar \sigma_h \|_0 \le C\|v_h\|_0.
\end{equation}
 This stability is obtained by the standard scaling argument as
   all norms on 5-dimensional space $M_{ij}$ are equivalent.

The final $\sigma_h$ for $v_h$ is defined as
 \a{ \sigma_h = \bar \sigma_h+   \tilde \sigma_h. }
As $\d_h \sigma_h=v_h$,  by \meqref{bounded} and \meqref{s20},
   the discrete B-B condition holds uniformly.
\end{proof}

\section{Numerical tests}\lab{s-numerical}

Two examples in 2D and one in 3D are presented to demonstrate the methods.
These  are pure displacement problem with a homogeneous boundary condition
         that $u\equiv 0$ on $\partial\Omega$. Assume the material is isotropic in the sense that
   \an{ \lab{A}
      A \sigma &= \frac 1{2\mu} \left(
	   \sigma - \frac{\lambda}{2\mu + n \lambda} \operatorname{tr}(\sigma)
		\delta \right), \quad n=2,3, }
  where $\delta=\p{1 &0\\0&1}$, and $\mu$ and $\lambda$ are the
	Lam\'e constants such that $0<\mu_1\le \mu \le \mu_2$ and
    $0<\lambda<\infty$.

In 2D, let  the  exact solution on the unit square $[0,1]^2$ be
   \e{\lab{e-1}   u= \p{4 x(1-x)y(1-y)\\ -4 x(1-x)y(1-y)}, }
 and
   \e{\lab{e-2}   u= \p{ e^{x-y}x (1-x) y (1-y)\\
	\sin(\pi x)\sin(\pi y)}. }
Notice  that the second example is from \cite{Yi}.

In 2D, the parameters in \meqref{A} are chosen as  \a{ \lambda=1 \quad \hbox{ and } \quad
	  \mu = \frac 12. }
Then, the true stress function $\sigma$
     and the load function $f$ are defined by the equations in
	\meqref{eqn1},   for the given  solution $u$.

 \begin{table}[htb]
  \caption{ The error and the order of convergence, for \meqref{e-1}.}
\lab{b-1}
\begin{center}  \begin{tabular}{c|rr|rr|rr}  
\hline & $ \|I_h u- u_h\|_{0}$ &$h^n$ &
    $ \|I_h\sigma -\sigma_h\|_{0}$ & $h^n$  &
    $ \|\d(I_h\sigma -\sigma_h)\|_{0}$ & $h^n$  \\ \hline
 1&  0.05893&0.0&  0.72887&0.0&   1.41421356&0.0\\
 2&  0.02447&1.3&  0.24585&1.6&   0.35355339&2.0\\
 3&  0.00714&1.8&  0.06587&1.9&   0.08838835&2.0\\
 4&  0.00190&1.9&  0.01708&1.9&   0.02209709&2.0\\
 5&  0.00048&2.0&  0.00440&2.0&   0.00552427&2.0\\
 6&  0.00012&2.0&  0.00113&2.0&   0.00138106&2.0\\
 7&  0.00003&2.0&  0.00029&2.0&   0.00034526&2.0 \\
      \hline
\end{tabular}\end{center} \end{table}

In the computation, the level one grid is the given domain, a unit
   square or a unit cube.
Each grid is refined into a half-size grid uniformly, to get
   a higher level grid, see the first column in Table \mref{b-1}.
In Table \mref{b-1}, the errors and the convergence order
   in various norms are listed  for the true solution \meqref{e-1}.
Here and in rest tables in the section,  $I_h$ is the usual nodal
   interpolation operator. For example,
  $I_hu_1(x_{i}+h/2, y_{j}+h/2)
	= u_1(x_{i}+h/2, y_{j}+h/2)$,
   $I_h \sigma_{11}
        (x_{i}, y_{j}+h/2)
	=\sigma_{11}
        (x_{i}, y_{j}+h/2)$, and $I_h\sigma_{12}=\Pi_{12}\sigma_{12}$,
    defined in \meqref{Pi12}.
An order 2 convergence is observed  for both displacement and stress,
  see Table \mref{b-1}.
However,  Theorem \ref{MainError} only shows the first order convergence.
Further studies on this superconvergence should be performed.

 The next example, \meqref{e-2}, of Yi \cite{Yi} is implemented for a
comparison.
The finite element errors and the order of convergence are listed
  in Table \mref{b-2}.
An order 2 convergence is again observed.
Notice  that, see Figure \mref{comparison}, the minimal  element of this paper
  has a much less dof than that of Yi,  but has one order higher
    of convergence.

 \begin{table}[htb]
  \caption{ The error and the order of convergence, for \meqref{e-2}.}
\lab{b-2}
\begin{center}  \begin{tabular}{c|rr|rr|rr}  
\hline & $ \|I_h u- u_h\|_{0}$ &$h^n$ &
    $ \|I_h\sigma -\sigma_h\|_{0}$ & $h^n$  &
    $ \|\d(I_h\sigma -\sigma_h)\|_{0}$ & $h^n$  \\ \hline
 1&  0.03619&0.0&  3.08021&0.0&  12.20143741&0.0\\
 2&  0.09843&0.0&  0.54275&2.5&   2.36338456&2.4\\
 3&  0.02594&1.9&  0.15169&1.8&   0.63139891&1.9\\
 4&  0.00664&2.0&  0.03964&1.9&   0.16050210&2.0\\
 5&  0.00167&2.0&  0.01014&2.0&   0.04029305&2.0\\
 6&  0.00042&2.0&  0.00258&2.0&   0.01008376&2.0\\
      \hline
\end{tabular}\end{center} \end{table}

As a third example,  we compute a 3D solution for
   the following exact solution:
  \e{\lab{e-3} u=\p{ 16x(1-x)y(1-y)z(1-z) \\
                     32x(1-x)y(1-y)z(1-z) \\
                     64x(1-x)y(1-y)z(1-z)}, }
on the unit cube $[0,1]^3$.
This time,  the parameters  in \meqref{A} are  taken as
  \a{ \lambda = 1, \quad \mu=\frac 12 \ \hbox{ and } \  n=3. }
Again the order of convergence is still one higher than what is proved
   in this paper,  see Table \mref{b-3}.

 \begin{table}[htb]
  \caption{ The error and convergence in 3D, for \meqref{e-3}. }
\lab{b-3}
\begin{center}  \begin{tabular}{c|rr|rr|rr}  
\hline & $ \|I_h u- u_h\|_{0}$ &$h^n$ &
    $ \|I_h\sigma -\sigma_h\|_{0}$ & $h^n$  &
    $ \|\d(I_h\sigma -\sigma_h)\|_{0}$ & $h^n$  \\ \hline
 1&  0.16366&0.0&  3.64496&0.0&  8.94883415&0.0 \\
 2&  0.07716&1.1&  0.89446&2.0&  1.73418255&2.4 \\
 3&  0.02332&1.7&  0.23153&1.9&  0.42577123&2.0 \\
 4&  0.00628&1.9&  0.05946&2.0&  0.10668050&2.0 \\
 5&  0.00161&2.0&  0.01518&2.0&  0.02628774&2.0 \\
      \hline
\end{tabular}\end{center} \end{table}

As the last example,  we compute the pure traction problem \meqref{e0}
 with the exact solution
\an{\lab{s-l} u= \left[100 x^2 (1-x)^2 y^2 (1-y)^2 -\frac 19 \right]
    \p{1\\-1}. }
The matrix $A$ is same as that in the first two examples.
Our new finite element has no problem in solving the pure
  traction problems.
The convergence results are listed in Table \ref{t-s-l}.

 \begin{table}[htb]
  \caption{\lab{t-s-l}
 The errors and the order of convergence for
   the pure traction problem \meqref{s-l}. }
\begin{center}  \begin{tabular}{c|rr|rr|rr}  
\hline & $ \|I_h u- u_h\|_{0}$ &$h^n$ &
    $ \|I_h\sigma -\sigma_h\|_{0}$ & $h^n$  &
    $ \|\d(I_h\sigma -\sigma_h)\|_{0}$ & $h^n$  \\ \hline
 2&  0.41470&0.0&  1.19604&0.0&   4.14320380&0.0 \\
 3&  0.12546&1.7&  0.26426&2.2&   1.10584856&1.9 \\
 4&  0.03273&1.9&  0.06572&2.0&   0.28799493&1.9 \\
 5&  0.00827&2.0&  0.01648&2.0&   0.07297595&2.0 \\
 6&  0.00207&2.0&  0.00412&2.0&   0.01830958&2.0 \\
 7&  0.00052&2.0&  0.00103&2.0&   0.00458156&2.0 \\
      \hline
\end{tabular}\end{center} \end{table}

\end{document}